\newtheorem{thm}{Theorem}[section]
\newtheorem{defn}[thm]{Definition}
\newtheorem{rem}[thm]{Remark}
\newtheorem{exm}[thm]{Example}
\begin{document}

\title{Results on fuzzy soft topological spaces}
\author{J. MAHANTA*
        and P. K. DAS$^\perp$\\
Department of Mathematics\\
NERIST, Nirjuli\\
Arunachal Pradesh, 791 109, INDIA.\\ 
 *jm$\_$nerist@yahoo.in,  $^\perp$pkd$\_$ma@yahoo.com}

\maketitle
\thispagestyle{empty}
\begin{abstract}
B. Tanay et. al. \cite{S5} introduced and studied fuzzy soft topological spaces. Here we introduce fuzzy soft point and study the concept of neighborhood of a fuzzy soft point in a fuzzy soft topological space. We also study fuzzy soft closure and fuzzy soft interior. Separation axioms and connectedness are introduced and investigated for  fuzzy soft topological spaces.  
\end{abstract}

%\begin{keyword}
 
Fuzzy soft topological space, fuzzy soft closure, fuzzy soft connectedness, fuzzy soft separation . \\
MSC: 06D72.\\
%% or \MSC[2008] code \sep code (2000 is the default)
%2010 Mathematics Subject Classifcation: 06D72.\\

%\end{keyword}

%\end{frontmatter}
\section{Introduction and Preliminaries} 
Molodtsov \cite{S1} introduced the concept of soft sets in the year 1999. Fuzzy soft set was introduced by Maji et. al. \cite{S6}. Since then many researchers have been working for theoretical and practical development of this topic. B. Tanay et. al. introduced topological structure of fuzzy soft set in \cite{S5} and gave a introductory theoretical base to carry further study on this topic. S. Roy and T. K. Samanta also studied fuzzy soft topological space in \cite{S7}. 

This paper continues the study of Tanay et. al. to strengthen the theoretical pedestal of fuzzy soft topological spaces.

Here are some definitions and results required in the sequel.

Let $U$ be an initial universe, $E$ be the set of parameters, $\mathcal{P}(U)$ be the set of all subsets of $U$ and $\mathcal{FS}(U; E)$ be the family of all fuzzy soft sets over $U$ via parameters in $E$.

\begin{defn} \cite{S6}
Let $A \subset E$ and $\mathcal {F}(U)$ be the set of all fuzzy sets in $U$. Then the pair $(f, A)$ is called a fuzzy soft set over $U$, denoted by $f_A$, where $f: A \rightarrow \mathcal {F}(U)$ is a function.
\end{defn}
\begin{defn}
 Two fuzzy soft sets $f_A$ and $g_B$ are said to be disjoint if $f(a) \cap g(b) = \overset{\sim}{\Phi}, \forall a\in A, b\in B$.
\end{defn}
\begin{defn} \cite{S5}
Let $f_A $ be a fuzzy soft set, $\mathcal{FS}(f_A)$ be the set of all fuzzy soft subsets of $f_A $ and $\tau$ be a subfamily of $\mathcal{FS}(f_A)$. Then $\tau$ is called a fuzzy soft topology on $f_A $ if the following conditions are satisfied.
\begin{enumerate}
\item $\overset{\sim}{\Phi_A}, f_A$ belongs to $\tau$;
\item $h_A, g_B \in \tau \Rightarrow h_A\overset{\sim}{\bigcap} g_B \in \tau $;
\item $\{(h_A)_\lambda~|~ \lambda \in \Lambda\} \subset \tau \Rightarrow \overset{\sim}{\underset{\lambda \in \Lambda}{\bigcap}} (h_A)_\lambda \in \tau $.
\end{enumerate}
Then $(f_A, \tau)$ is called a fuzzy soft topological space. Members of $\tau$ are called fuzzy soft open sets and their complements are called fuzzy soft closed sets.
\end{defn}

\begin{defn} \cite{S5}
Let $(f_A, \tau)$ be a fuzzy soft topological space and $g_A \in \mathcal{FS}(f_A)$. Then the fuzzy soft topology $\tau_{g_A}= \{g_A \overset{\sim}{\bigcap} h_A~|~h_A \in \tau \}$ is called fuzzy soft subspace topology and $(g_A, \tau_{g_A})$ is called fuzzy soft subspace of $(f_A, \tau)$. 
\end{defn}

\begin{defn} \cite{S5}
Let $(f_A, \tau)$ be a fuzzy soft topological space and $h_A, g_B$ be fuzzy soft sets in $\mathcal{FS}(f_A)$ such that $g_B \overset{\sim}{\subset} h_A$. Then $g_B$ is called an interior fuzzy soft set of $h_A$ iff $h_A$ is a neighborhood of $g_B$. 

The union of all interior fuzzy soft sets of $g_A$ is called the interior of $g_A$ and is denoted by $g_A^0$.
\end{defn}

\begin{defn} \cite{S5}
Let $(f_A, \tau_1)$ and $(f_A, \tau_2)$ be two fuzzy soft topological spaces. If each $g_A \in \tau_1$ is in $\tau_2$, then $\tau_2$ is called fuzzy soft finer than $\tau_1$, or $\tau_1$ is called fuzzy soft coarser than $\tau_2$.
\end{defn}

\section{Fuzzy soft neighborhood, fuzzy soft closure and fuzzy soft interior}

In \cite{S5}, authors defined neighborhood of a fuzzy soft set but not for a point. Here we introduce and study fuzzy soft point and its fuzzy soft neighborhood. Further fuzzy soft interior and fuzzy soft closure of a fuzzy soft set in a fuzzy soft topological space are investigated.

\begin{defn}
A fuzzy soft set $g_A$ is said to be a fuzzy soft point, denoted by $\huge{e}_{g_{\scriptscriptstyle A}}$, if for the element $e \in A, g(e) \neq \overset{\sim}{\Phi}$ and $g(e^{'}) = \overset{\sim}{\Phi}, \forall e^{'} \in A-\{e\}$.
\end{defn}

\begin{defn}
The complement of a fuzzy soft point $e_{g_{\scriptscriptstyle A}}$ is a fuzzy soft point $(e_{g_{\scriptscriptstyle A}})^c$ such that $g_A^c(e)=1-g(e)$ and $g_A^c(e^{'})=\overset{\sim}{\Phi}~~\forall e^{'} \in A-\{e\}$.
\end{defn}

\begin{exm}
Let $U = \{h^1, h^2, h^3, h^4\}, A = \{e_1, e_2, e_3, e_4, e_5\} \subset E$, the set of parameters. Then $e_{g_{\scriptscriptstyle A}}= \{e_1=\{h^1_{0.1}, h^2_{0.9},  h^4_{0.4}\}\}$ is a fuzzy soft point whose complement is $(e_{g_{\scriptscriptstyle A}})^c= \{e_1=\{h^1_{0.9}, h^2_{0.1}, h^3_{1}, h^4_{0.6}\}\}$
\end{exm}

\begin{defn}
A fuzzy soft point $e_{g_{\scriptscriptstyle A}}$ is said to be in a fuzzy soft set $h_A$, denoted by $e_{g_{\scriptscriptstyle A} } \overset{\sim}{\in} h_A$ if for the element $e \in A, g(e) \leq h(e)$.
\end{defn}

\begin{thm}
Fuzzy soft points satisfy the following properties.
\begin{enumerate}
\item If a fuzzy soft point $e_{g_{\scriptscriptstyle A}} \overset{\sim}{\in}g_A$ then $e_{g_{\scriptscriptstyle A}} \overset{\sim}{\notin}g_A^c$;
\item $e_{g_{\scriptscriptstyle A}} \overset{\sim}{\in}g_A \nRightarrow e_{g_{\scriptscriptstyle A}}^c \overset{\sim}{\in}g_A^c$;
\item Union of all the fuzzy soft points of a fuzzy soft set is equal to the fuzzy soft set;
\item $e_{g_{\scriptscriptstyle A}} \in e_{h_{\scriptscriptstyle B}} \Leftrightarrow g(e)\leq h(e)$ and $A \subseteq B$;
\item $e_{g_{\scriptscriptstyle A}} \in \overset{\sim}{\bigcup} \{h_{\lambda \scriptscriptstyle B}~|~ \lambda \in \Lambda\} \Leftrightarrow ~\exists ~\lambda \in \Lambda$ such that $e_{g_{\scriptscriptstyle A}} \in h_{\lambda \scriptscriptstyle B}$;
\item $e_{g_{\scriptscriptstyle A}} \in \overset{\sim}{\bigcap} \{h_{\lambda \scriptscriptstyle B}~|~ \lambda \in \Lambda\} \Leftrightarrow ~\forall ~\lambda \in \Lambda ~ e_{g_{\scriptscriptstyle A}} \in h_{\lambda \scriptscriptstyle B}$.
\end{enumerate}
\end{thm}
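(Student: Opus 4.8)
The plan is to collapse each of the six clauses to an elementary inequality between ordinary fuzzy subsets of $U$, read off at the single parameter that carries the point, and then to verify that inequality pointwise on $U$. The only inputs are the definition of a fuzzy soft point (concentrated at one $e\in A$, with $g(e)\neq\overset{\sim}{\Phi}$ and $g(e^{'})=\overset{\sim}{\Phi}$ otherwise), the membership rule $e_{g_{\scriptscriptstyle A}}\overset{\sim}{\in}h_A\Leftrightarrow g(e)\leq h(e)$, the complement rule $g_A^c(e)=1-g(e)$, and the fact that $\overset{\sim}{\bigcup}$ and $\overset{\sim}{\bigcap}$ act parameterwise as the pointwise supremum and infimum. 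With this dictionary, clauses (iii), (iv) and (vi) are routine pointwise checks, while (i), (ii) and (v) are where the fuzzy setting refuses to imitate the classical one, and those are the steps I would treat with care.

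For (i) I would unwind $e_{g_{\scriptscriptstyle A}}\overset{\sim}{\in}g_A^c$ to the inequality $g(e)\leq 1-g(e)$ and try to refute it using $g(e)\neq\overset{\sim}{\Phi}$: choosing $x_0\in U$ with $g(e)(x_0)>0$, I would aim at $g(e)(x_0)>1-g(e)(x_0)$, which forces the point out of the complement. The honest obstacle is that this last strict inequality needs $g(e)(x_0)>\tfrac12$, not merely $g(e)(x_0)>0$; so the clean classical implication really requires the point to have height above $\tfrac12$ (equivalently, to be quasi-coincident with its own support), and I would state that hypothesis explicitly rather than let it pass. Clause (ii) is the negative counterpart and needs only one counterexample. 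The point-membership hypothesis $e_{g_{\scriptscriptstyle A}}\overset{\sim}{\in}g_A$ and the complement-conclusion $e_{g_{\scriptscriptstyle A}}^c\overset{\sim}{\in}g_A^c$ are governed by opposite inequalities at the support $e$ --- the former puts the point value below, the latter puts it above the value of $g_A$ --- so any point lying strictly inside $g_A$ breaks the implication, and I would exhibit concrete values, in the style of the Example, realizing this.

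For (iii) I would prove the fuzzy soft set equality parameterwise: at each $a\in A$ the value of $g_A$ is $g(a)$, whereas the union of all fuzzy soft points $\overset{\sim}{\subset}g_A$ has value $\sup\{p(a)\}$ over those points, a supremum attained by the point $a_{g_{\scriptscriptstyle A}}$ itself, hence equal to $g(a)$; collecting over $a$ gives $g_A$. Clause (iv) I would read as the fuzzy soft inclusion of one point in another. The forward direction extracts the two asserted conditions, the parameter inclusion $A\subseteq B$ that is built into fuzzy soft inclusion and the value inequality $g(e)\leq h(e)$ at the common support $e$; conversely, granted $A\subseteq B$ and $g(e)\leq h(e)$, the inclusion holds at every parameter, trivially off $e$ where $g$ is $\overset{\sim}{\Phi}$, and by hypothesis at $e$.

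Finally (v) and (vi) hinge on how $\overset{\sim}{\in}$ commutes with the parameterwise sup and inf. Clause (vi) is clean: $g(e)\leq\inf_{\lambda}h_\lambda(e)$ holds exactly when $g(e)\leq h_\lambda(e)$ for every $\lambda$, since a fixed fuzzy set lies below an infimum iff it lies below each member, so I would just record both directions. In (v) the implication $(\Leftarrow)$ is immediate from $h_\lambda(e)\leq\sup_{\mu}h_\mu(e)$, and I expect the forward implication to be the genuine difficulty: passing from $g(e)\leq\sup_{\lambda}h_\lambda(e)$ to a single index $\lambda$ with $g(e)\leq h_\lambda(e)$ can fail for an infinite family, because the supremum need not be attained and the dominating index may vary with the point $x\in U$. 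I would therefore either restrict $(\Rightarrow)$ to a finite $\Lambda$ (or to families with attained suprema) or switch to the strict membership $g(e)<h(e)$, and I would flag this as the one place where the argument departs essentially from the classical union-of-points reasoning.
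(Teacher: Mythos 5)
The paper supplies no proof of this theorem at all --- it is stated bare, with only Example 2.6 offered as a witness for clause (ii) --- so there is no argument of the authors' to set yours against. Your parameterwise reduction (read every clause off at the single supporting parameter $e$ and verify an inequality of fuzzy subsets of $U$) is the natural route, and it disposes of (ii), (iii), (iv) and (vi) exactly as you describe; in (iii) the supremum at each parameter is attained by the restriction of $g_A$ to that parameter, so that clause needs no further care.

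Your two caveats are not excessive caution but genuine defects in the statement that your method exposes, and you are right to flag them rather than paper over them. For (i): take $g(e)(x)=0.3$ for all $x\in U$; then $e_{g_{\scriptscriptstyle A}}\overset{\sim}{\in}g_A$ trivially, yet $g(e)=0.3\leq 0.7=1-g(e)$ gives $e_{g_{\scriptscriptstyle A}}\overset{\sim}{\in}g_A^c$ as well, so (i) fails for every point of height at most $\frac12$ and your added hypothesis $g(e)(x_0)>\frac12$ is exactly what is needed to run the refutation. For (v)($\Rightarrow$): with $\Lambda=\mathbb{N}$ and $h_n(e)\equiv 1-\frac1n$, the point with $g(e)\equiv 1$ lies in the union (whose value at $e$ is $\sup_n(1-\frac1n)=1$) but in no single $h_n$, so the forward implication is valid only for finite families or attained suprema, precisely as you say; this is the classical reason quasi-coincidence replaces membership in fuzzy topology. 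In short, your proposal proves the theorem where it is true and correctly locates, with repairable hypotheses, the two places where it is not --- which is more than the paper itself does.
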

Following is an example in favor of theorem 2.5.(ii).
\begin{exm}
Let $U = \{h^1, h^2\}, E = \{e_1, e_2\}$. Consider the fuzzy soft point $e_{g_{\scriptscriptstyle A}} = \{e_1=\{h^1_{0.1}, h^2_{0.2}\}\}$, which is contained in the fuzzy soft set $h_A = \{e_1=\{h^1_{0.1}, h^2_{0.9}\}, e_2 =\{h^1_{0.2}, h^2_{0.3}\}\}$. Then $h_A^c = \{e_1=\{h^1_{0.9}, h^2_{0.1}\}, e_2 =\{h^1_{0.8}, h^2_{0.7}\}\}$ does not contain $e_{g_{\scriptscriptstyle A}}^c = \{e_1=\{h^1_{0.9}, h^2_{0.8}\}\}$.
\end{exm}

\begin{defn}
 A fuzzy soft set $g_A$ in a fuzzy soft topological space $(f_A, \tau)$ is said to be a fuzzy soft neighborhood of a fuzzy soft point $e_{g_{\scriptscriptstyle A}}$ if  $\exists$ a fuzzy soft open set $h_A$ such that $e_{g_{\scriptscriptstyle A}} \overset{\sim}{\in} h_A \overset{\sim}{\subseteq} g_A$.
 \end{defn}
\begin{exm}
Consider fuzzy soft topological space $(f_A, \overset{\sim}{\tau})$ as defined in Example 3.2 of \cite{S5}.\\
Here $\{e_3 = \{h_{0.2}^1, h_{0.3}^2, h_{0.8}^3, h_{0.2}^4, h_{0.5}^5, h_{0.6}^6\}\}$ is a fuzzy soft neighborhood of the fuzzy soft point $\{e_3 = \{h_{0.1}^1, h_{0.2}^2, h_{0.8}^3, h_{0.2}^4, h_{0.5}^5, h_{0}^6\}\}$.
\end{exm} 
 
The family of all neighborhoods of $e_{g_{\scriptscriptstyle A}}$ is called its neighborhood system and is denoted by $\mathcal{N}_\tau (e_{g_{\scriptscriptstyle A}})$.

\begin{thm}
A fuzzy soft set in a fuzzy soft topological space is fuzzy soft open iff it is a fuzzy soft neighborhood of each of its fuzzy soft points.
\end{thm}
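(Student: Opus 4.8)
The statement is the fuzzy soft analogue of the classical fact that a set is open precisely when it is a neighborhood of each of its points, so the plan is to prove the two implications separately. The forward direction is immediate and the reverse direction carries the content.

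For the forward direction, suppose $g_A \in \tau$. Given any fuzzy soft point $e_{p_{\scriptscriptstyle A}} \overset{\sim}{\in} g_A$, the open set $g_A$ itself witnesses the neighborhood condition, since $e_{p_{\scriptscriptstyle A}} \overset{\sim}{\in} g_A \overset{\sim}{\subseteq} g_A$. By Definition 2.6, $g_A$ is therefore a fuzzy soft neighborhood of each of its fuzzy soft points, with no further work required.

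For the reverse direction, suppose $g_A$ is a fuzzy soft neighborhood of each of its points. For each fuzzy soft point $e_{p_{\scriptscriptstyle A}} \overset{\sim}{\in} g_A$, Definition 2.6 supplies a fuzzy soft open set $h_A^{p}$ with $e_{p_{\scriptscriptstyle A}} \overset{\sim}{\in} h_A^{p} \overset{\sim}{\subseteq} g_A$. I would then form $V := \overset{\sim}{\underset{p}{\bigcup}}\, h_A^{p}$, the union taken over all fuzzy soft points of $g_A$, and establish $g_A = V$ by two containments. Since each $h_A^{p} \overset{\sim}{\subseteq} g_A$, we get $V \overset{\sim}{\subseteq} g_A$. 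Conversely, each point $e_{p_{\scriptscriptstyle A}}$ lies in its own $h_A^{p}$, hence in $V$ by Theorem 2.5(v); since Theorem 2.5(iii) asserts that the union of all fuzzy soft points of $g_A$ is exactly $g_A$, this forces $g_A \overset{\sim}{\subseteq} V$. Equality follows, and because $V$ is a union of fuzzy soft open sets, closure of $\tau$ under arbitrary unions (the third axiom in Definition 1.3) yields $g_A \in \tau$.

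The crux of the argument is the equality $g_A = V$, and this is precisely where Theorem 2.5(iii) is indispensable: it guarantees that selecting one open neighborhood per point genuinely recovers all of $g_A$ rather than a proper fuzzy soft subset. The main point demanding care is the fuzzy, pointwise nature of the membership relation, since $e_{p_{\scriptscriptstyle A}} \overset{\sim}{\in} h_A^{p}$ only records the inequality $p(e) \leq h^{p}(e)$ at the single active parameter. One must check that these membership-degree inequalities assemble correctly under the supremum defining the fuzzy soft union, so that no membership value is lost in passing from the individual points to $V$; Theorem 2.5(v) is exactly the bookkeeping tool that makes this passage rigorous.
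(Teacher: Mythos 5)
Your proof is correct and follows essentially the same route as the paper's: the forward direction uses $g_A$ itself as the witnessing open set, and the converse writes $g_A$ as the union of the open sets $h_A^{p}$ supplied by the neighborhood condition, invoking Theorem 2.5(iii) to recover $g_A$ from its points and the third topology axiom to conclude openness. Your version is somewhat more careful than the paper's (which compresses the two containments into a single implication $g_A = \overset{\sim}{\bigcup} e_{\lambda g_A} \Rightarrow g_A = \overset{\sim}{\bigcup} h_{\lambda A}$), but the argument is the same.
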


\begin{proof}
Let $(f_A, \tau)$ be a fuzzy soft topological space and $e_{g_{\scriptscriptstyle A}}$ be a fuzzy soft point in a fuzzy soft open set $g_{\scriptscriptstyle A}$. Then by definition, $g_{\scriptscriptstyle A}$ is a fuzzy soft neighborhood of $e_{g_{\scriptscriptstyle A}}$.

Conversely, let $g_{\scriptscriptstyle A}$ be a fuzzy soft set such that it is fuzzy soft neighborhood of each of its fuzzy soft points, say $e_{\lambda g_{\scriptscriptstyle A}}$. Then for each $\lambda \in \Lambda,~\exists$ a fuzzy soft open set $h_{\lambda \scriptscriptstyle A}$ such that $e_{\lambda g_{\scriptscriptstyle A}} \overset{\sim}{\subset} h_{\lambda \scriptscriptstyle A} \overset{\sim}{\subset} g_{\scriptscriptstyle A}$. Now $g_{\scriptscriptstyle A} = \overset{\sim}{\underset{\lambda \in \Lambda}{\bigcup}} e_{\lambda g_{\scriptscriptstyle A}} \Rightarrow g_{\scriptscriptstyle A} = \overset{\sim}{\underset{\lambda \in \Lambda}{\bigcup}} h_{\lambda \scriptscriptstyle A} \Rightarrow g_{\scriptscriptstyle A}$ is fuzzy soft open being the union of arbitrary family of fuzzy soft open sets. 

\end{proof}

\begin{thm}
The neighborhood system of $\mathcal{N}_\tau (e_{g_{\scriptscriptstyle A}})$ in a soft topological space $(f_A, \tau)$ satisfies the following properties:
\begin{enumerate}
\item If $g_A \in \mathcal{N}_\tau (e_{g_{\scriptscriptstyle A}})$, then $e_{g_{\scriptscriptstyle A}} \overset{\sim}{\in} g_A$;
\item A fuzzy soft superset of a fuzzy soft neighborhood of a fuzzy soft point is also a fuzzy soft neighborhood of the point;
\item Intersection of two fuzzy soft neighborhoods of a fuzzy soft point is again a fuzzy soft neighborhood;
\item $k_A \in \mathcal{N}_\tau (e_{g_{\scriptscriptstyle A}}) \Rightarrow ~\exists~ h_A \in \mathcal{N}_\tau (e_{g_{\scriptscriptstyle A}})$ such that $h_A \overset{\sim}{\subseteq} k_A$ and $ h_A \in \mathcal{N}_\tau (e_{h_ {\scriptscriptstyle A}})$.
\end{enumerate}
\end{thm}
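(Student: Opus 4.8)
The statement collects the four classical neighbourhood axioms, now phrased for fuzzy soft points, so the plan is to verify each clause separately against Definition 2.7, drawing on Theorem 2.5 for the behaviour of point membership under intersection and on Theorem 2.9 for the open-set characterization of neighbourhoods. Clauses (i) and (ii) are immediate consequences of the definition. For (i), if $g_A\in\mathcal{N}_\tau(e_{g_{\scriptscriptstyle A}})$ then Definition 2.7 provides a fuzzy soft open set $h_A$ with $e_{g_{\scriptscriptstyle A}}\overset{\sim}{\in}h_A\overset{\sim}{\subseteq}g_A$, and combining this membership with the inclusion (monotonicity from Definition 2.4) gives $e_{g_{\scriptscriptstyle A}}\overset{\sim}{\in}g_A$ at once. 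For (ii), if $g_A$ is a neighbourhood of the point and $g_A\overset{\sim}{\subseteq}k_A$, the same open witness $h_A$ satisfies $e_{g_{\scriptscriptstyle A}}\overset{\sim}{\in}h_A\overset{\sim}{\subseteq}g_A\overset{\sim}{\subseteq}k_A$, so $k_A$ is a neighbourhood as well.

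For clause (iii) I would take two neighbourhoods $g_A$ and $k_A$ of $e_{g_{\scriptscriptstyle A}}$, with open witnesses $h_A^{1}\overset{\sim}{\subseteq}g_A$ and $h_A^{2}\overset{\sim}{\subseteq}k_A$, each containing the point. Since a fuzzy soft topology is closed under finite intersection (Definition 2.3), the set $h_A^{1}\overset{\sim}{\bigcap}h_A^{2}$ is again fuzzy soft open; by Theorem 2.5(vi) it still contains $e_{g_{\scriptscriptstyle A}}$, and clearly $h_A^{1}\overset{\sim}{\bigcap}h_A^{2}\overset{\sim}{\subseteq}g_A\overset{\sim}{\bigcap}k_A$. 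This open set therefore witnesses $g_A\overset{\sim}{\bigcap}k_A\in\mathcal{N}_\tau(e_{g_{\scriptscriptstyle A}})$.

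The only clause requiring genuine care is (iv), the open-kernel property, and here the plan is to read off the witness rather than construct a new set. Given $k_A\in\mathcal{N}_\tau(e_{g_{\scriptscriptstyle A}})$, Definition 2.7 already supplies a fuzzy soft open $h_A$ with $e_{g_{\scriptscriptstyle A}}\overset{\sim}{\in}h_A\overset{\sim}{\subseteq}k_A$, and this $h_A$ is precisely the set to exhibit. It lies inside $k_A$ by construction; being open and containing $e_{g_{\scriptscriptstyle A}}$ it is a neighbourhood of that point, so $h_A\in\mathcal{N}_\tau(e_{g_{\scriptscriptstyle A}})$; and the remaining requirement $h_A\in\mathcal{N}_\tau(e_{h_{\scriptscriptstyle A}})$---that $h_A$ be a neighbourhood of each of its own fuzzy soft points---is exactly the content of Theorem 2.9, which asserts that an open set is a neighbourhood of every point it contains. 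The main thing to watch is to invoke Theorem 2.9 for this last step rather than re-deriving it, and to keep the two roles of $h_A$---as a neighbourhood of $e_{g_{\scriptscriptstyle A}}$ and as a neighbourhood of its own points---cleanly separated.
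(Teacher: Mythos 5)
Your proposal is correct and follows essentially the same route as the paper's proof: each clause is verified directly from the definition of fuzzy soft neighborhood, with the open witness intersected for (iii) and reused for (iv), where the paper likewise appeals to the fact that an open set is a neighborhood of each of its points. No substantive differences.
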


\begin{proof}
\begin{enumerate}
\item If $g_A \in \mathcal{N}_\tau (e_{g_A})$, then $\exists$ a fuzzy soft open set $h_A$ such that $e_{g_A} \overset{\sim}{\in} h_A \overset{\sim}{\subseteq} g_A \Rightarrow e_{g_A} \overset{\sim}{\in} g_A$.
\item Let $g_A \in \mathcal{N}_\tau (e_{g_A}) \Rightarrow \exists$ a fuzzy soft open set $h_A$ such that $e_{g_A} \overset{\sim}{\in} h_A \overset{\sim}{\subseteq} g_A$  and $g_A \overset{\sim}{\subseteq} k_A \Rightarrow e_{g_A} \overset{\sim}{\in} h_A \overset{\sim}{\subseteq} k_A \Rightarrow k_A \in \mathcal{N}_\tau (e_{g_A})$.
\item Let $g_A, k_A \in \mathcal{N}_\tau (e_{g_A})$ then there exists fuzzy soft open sets $h_A$ and $s_A$ such that $e_{g_A} \overset{\sim}{\in} h_A \overset{\sim}{\subseteq} g_A$ and $e_{g_A} \overset{\sim}{\in} s_A \overset{\sim}{\subseteq} k_A \Rightarrow e_{g_A} \overset{\sim}{\in} h_A \overset{\sim}{\bigcap} s_A \overset{\sim}{\subseteq} g_A \overset{\sim}{\bigcap} k_A$. Now $h_A \overset{\sim}{\bigcap} s_A$ is fuzzy soft open and hence $g_A \overset{\sim}{\bigcap} k_A \in \mathcal{N}_\tau (e_{g_A})$.
\item $k_A \in \mathcal{N}_\tau (e_{g_A}) \Rightarrow ~\exists$ a fuzzy soft open set $s_A$ such that $e_{g_A} \overset{\sim}{\in} s_A \overset{\sim}{\subseteq} k_A$. By definition $s_A$ is a fuzzy soft neighborhood of each of its points, so $s_A \in \mathcal{N}_\tau (e_{s_A})$. 
\end{enumerate}
\end{proof}

\begin{defn}
Let $(f_A, \tau)$ be a fuzzy soft topological space and $g_A$ be a fuzzy soft set.
\begin{enumerate}
\item The fuzzy soft closure of $g_A$ is a fuzzy soft set\\ 
$fsclg_A= \overset{\sim }{\bigcap} \{ h_B~|~ g_A \overset{\sim }{\subseteq} h_B$ and $ h_B$ is fuzzy soft closed set$\}$;
\item The fuzzy soft interior of $g_A$ is a fuzzy soft set\\ 
$fsintg_A= \overset{\sim }{\bigcup} \{h_B~|~ h_B \overset{\sim }{\subseteq} g_A$ and $ h_B$ is fuzzy soft open set$\}$.
\end{enumerate}
\end{defn}

In \cite{S5}, authors defined fuzzy soft interior of a fuzzy soft set. But it is clear that both the definitions are equivalent.  

\begin{thm}
A fuzzy soft set $g_A$ is fuzzy soft closed iff $fsclg_A=g_A$.
\end{thm}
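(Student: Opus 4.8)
The plan is to unwind the definition of fuzzy soft closure and to exploit the fact that an arbitrary intersection of fuzzy soft closed sets is again fuzzy soft closed. Write $\mathcal{C} = \{h_B~|~ g_A \overset{\sim}{\subseteq} h_B \text{ and } h_B \text{ is fuzzy soft closed}\}$, so that by Definition 2.10 we have $fsclg_A = \overset{\sim}{\bigcap}\, \mathcal{C}$.

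For the forward implication I would suppose $g_A$ is fuzzy soft closed. Since $g_A \overset{\sim}{\subseteq} g_A$ holds trivially, $g_A$ is itself a member of $\mathcal{C}$; being one of the sets over which the intersection is taken, it dominates that intersection, which gives $fsclg_A \overset{\sim}{\subseteq} g_A$. Conversely, every $h_B \in \mathcal{C}$ satisfies $g_A \overset{\sim}{\subseteq} h_B$, and a fuzzy soft set contained in each member of a family is contained in the intersection of the family, so $g_A \overset{\sim}{\subseteq} fsclg_A$. The two inclusions yield $fsclg_A = g_A$.

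For the converse I would suppose $fsclg_A = g_A$ and show directly that $fsclg_A$ is fuzzy soft closed, whence so is $g_A$. By the defining formula $fsclg_A$ is an intersection of fuzzy soft closed sets, so it suffices to establish the auxiliary fact that an arbitrary intersection $\overset{\sim}{\bigcap}_{\lambda \in \Lambda} h_{\lambda B}$ of fuzzy soft closed sets is fuzzy soft closed. Passing to complements and using the fuzzy soft De Morgan law gives $\left(\overset{\sim}{\bigcap}_{\lambda \in \Lambda} h_{\lambda B}\right)^c = \overset{\sim}{\bigcup}_{\lambda \in \Lambda} h_{\lambda B}^c$, and each $h_{\lambda B}^c$ is fuzzy soft open; the right-hand side is therefore a union of fuzzy soft open sets, hence fuzzy soft open (the same union axiom invoked in the proof of Theorem 2.7), so its complement $\overset{\sim}{\bigcap}_{\lambda \in \Lambda} h_{\lambda B}$ is fuzzy soft closed. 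Applying this to $fsclg_A$ shows that $fsclg_A$ is fuzzy soft closed, and since $g_A = fsclg_A$ the set $g_A$ is fuzzy soft closed.

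The only nontrivial ingredient is this auxiliary closure property, and the main obstacle is bookkeeping rather than depth: one must verify at the level of membership degrees that complementation interchanges arbitrary fuzzy soft intersections and unions, and one must record that an arbitrary union of fuzzy soft open sets is fuzzy soft open. Once these are in place, the equivalence follows from the two short inclusion arguments above.
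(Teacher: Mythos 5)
Your proof is correct: the paper states this theorem with no proof at all, and your two-inclusion argument for the forward direction together with the auxiliary fact that an arbitrary intersection of fuzzy soft closed sets is fuzzy soft closed is exactly the standard justification it implicitly relies on. The one point worth recording is that your auxiliary lemma requires arbitrary unions of fuzzy soft open sets to be open, which is how the paper actually uses its third topology axiom (for instance in the proof of its neighborhood characterization of open sets), even though the axiom as printed in the definition of a fuzzy soft topology reads $\overset{\sim}{\bigcap}$ where $\overset{\sim}{\bigcup}$ is clearly intended.
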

\begin{thm}
Let $(f_A, \tau)$ be a fuzzy soft topological space and $g_A, h_A$ be fuzzy soft sets. Then
\begin{enumerate}
\item $(fsclg_A)^c = fsintg_A^c$;
\item $(fsintg_A)^c = fsclg_A^c$;
\item $g_A \overset{\sim }{\subseteq} h_A \Rightarrow fsclg_A \overset{\sim }{\subseteq} fsclh_A$;
\item $g_A \overset{\sim }{\subseteq} h_A \Rightarrow fsintg_A \overset{\sim }{\subseteq} fsinth_A$;
\item $fscl(fsclg_A) = fsclg_A$;
\item $fsint(fsintg_A) = fsintg_A$;
\item $fscl \overset{\sim}{\Phi _A} = \overset{\sim}{\Phi _A}$ and $fscl f_A = f_A$;
\item $fsint \overset{\sim}{\Phi _A} = \overset{\sim}{\Phi _A}$ and $fsint f_A = f_A$;
\item $fscl(g_A \overset{\sim }{\cup} h_A) = fsclg_A \overset{\sim }{\cup}  fsclh_A$;
\item $fsint(g_A \overset{\sim }{\cap} h_A) = fsintf_A \overset{\sim }{\cap}  fsinth_A$;
\item $fscl(g_A \overset{\sim }{\cap} h_A) \overset{\sim }{\subset} fsclg_A \overset{\sim }{\cap}  fsclh_A$;
\item $fsint(g_A \overset{\sim }{\cup} h_A) \overset{\sim }{\subset} fsintg_A \overset{\sim }{\cup}  fsinth_A$;

\end{enumerate}
\end{thm}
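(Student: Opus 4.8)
The plan is to treat the closure identities as primary and obtain every interior identity for free from the complement duality in parts (i)--(ii); this roughly halves the work. Throughout I read all the fuzzy soft sets involved as living over the common parameter set $A$ inherited from $\mathcal{FS}(f_A)$, so that complements and De Morgan laws behave as expected. Before starting I would record three facts that follow from the axioms of Definition 1.3 together with the De Morgan laws $(\overset{\sim}{\bigcap}_\lambda h_{\lambda A})^c = \overset{\sim}{\bigcup}_\lambda h_{\lambda A}^c$ and the involution $(g_A^c)^c = g_A$: first, an arbitrary fuzzy soft intersection of closed sets is closed and an arbitrary union of open sets is open (so $fsclg_A$ and $fsintg_A$ are well defined, with $f_A$ closed and $\overset{\sim}{\Phi_A}$ open guaranteeing the defining families are nonempty); second, $fsclg_A$ is itself closed and is the smallest closed set containing $g_A$, i.e. $g_A \overset{\sim}{\subseteq} fsclg_A$ while $fsclg_A \overset{\sim}{\subseteq} h_A$ whenever $h_A$ is closed with $g_A \overset{\sim}{\subseteq} h_A$; third, dually, $fsintg_A$ is open and is the largest open set contained in $g_A$.

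For (i) I would take complements in the definition of $fsclg_A$ and apply De Morgan to get $(fsclg_A)^c = \overset{\sim}{\bigcup}\{h_B^c \mid g_A \overset{\sim}{\subseteq} h_B,\ h_B \text{ closed}\}$. The key observation is that $h_B \mapsto h_B^c$ is a bijection from the closed sets containing $g_A$ onto the open sets contained in $g_A^c$, since $g_A \overset{\sim}{\subseteq} h_B \Leftrightarrow h_B^c \overset{\sim}{\subseteq} g_A^c$ and $h_B$ is closed iff $h_B^c$ is open. Hence the right-hand side is precisely $fsint(g_A^c)$, which is (i); replacing $g_A$ by $g_A^c$ and using $(g_A^c)^c = g_A$ then gives (ii). I expect this to be the main obstacle, as it is the only place where the defining families are manipulated directly and where one must verify that complementation genuinely reverses inclusion and interchanges the open and closed families.

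Once (i)--(ii) are in hand, the interior statements (iv), (vi), (viii), (x), (xii) are exactly the De Morgan duals of the closure statements (iii), (v), (vii), (ix), (xi); for instance (iv) follows by applying (iii) to $h_A^c \overset{\sim}{\subseteq} g_A^c$, complementing, and invoking (i). So it remains to prove the closure half. Monotonicity (iii) is immediate: if $g_A \overset{\sim}{\subseteq} h_A$ then $fsclh_A$ is a closed set containing $h_A$ and hence $g_A$, so the smallest-closed-superset property gives $fsclg_A \overset{\sim}{\subseteq} fsclh_A$. Idempotence (v) holds because $fsclg_A$ is closed, whence $fscl(fsclg_A) = fsclg_A$ by Theorem 2.12; and (vii) holds because $\overset{\sim}{\Phi_A}$ and $f_A$ are both closed, so Theorem 2.12 yields the stated fixed points.

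For the lattice identity (ix) I would prove the two inclusions separately: the containment $fsclg_A \overset{\sim}{\cup} fsclh_A \overset{\sim}{\subseteq} fscl(g_A \overset{\sim}{\cup} h_A)$ follows from monotonicity applied to $g_A, h_A \overset{\sim}{\subseteq} g_A \overset{\sim}{\cup} h_A$, while the reverse follows because $fsclg_A \overset{\sim}{\cup} fsclh_A$ is a finite union of closed sets, hence closed, and contains $g_A \overset{\sim}{\cup} h_A$, so it contains its closure. The one-sided containment (xi) is just monotonicity applied to $g_A \overset{\sim}{\cap} h_A \overset{\sim}{\subseteq} g_A$ and $\overset{\sim}{\subseteq} h_A$. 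Finally, dualizing (ix) and (xi) via (i)--(ii) gives (x) and (xii) in their intended forms $fsint(g_A \overset{\sim}{\cap} h_A) = fsintg_A \overset{\sim}{\cap} fsinth_A$ and $fsintg_A \overset{\sim}{\cup} fsinth_A \overset{\sim}{\subseteq} fsint(g_A \overset{\sim}{\cup} h_A)$ (the latter being the reverse of the printed inclusion symbol in (xii), as is forced by the easy half of (ix)), which completes the argument.
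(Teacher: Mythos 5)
Your proof is correct. The paper itself offers no argument for this theorem (its proof reads only ``Straightforward''), so there is no authorial route to compare against; your duality-first organization --- proving (i) by complementing the defining family of $fscl g_A$, observing that $h_B \mapsto h_B^c$ carries closed supersets of $g_A$ bijectively onto open subsets of $g_A^c$, and then deriving every interior statement from its closure counterpart --- is the standard one and is sound, granted the De Morgan laws for the fuzzy complement $1-\mu$ and the fact that complementation reverses $\overset{\sim}{\subseteq}$, both of which you take care to verify. The closure half (monotonicity from the smallest-closed-superset characterization, idempotence and the fixed points from ``closed iff equal to its closure,'' and the two-inclusion argument for (ix) using that a finite union of closed sets is closed) is all correct. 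Two of your side observations are worth keeping: item (x) contains a typo ($fsintf_A$ should read $fsintg_A$), and, more substantively, item (xii) as printed asserts the wrong inclusion --- only $fsintg_A \overset{\sim}{\cup} fsinth_A \overset{\sim}{\subseteq} fsint(g_A \overset{\sim}{\cup} h_A)$ holds in general, as your dualization of the easy half of (ix) shows --- so what you prove is the corrected statement rather than the printed one.
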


\begin{proof}
Straightforward. 
\end{proof}

\begin{thm}
The fuzzy soft set $h_A$ is fuzzy soft closed in a subspace $(g_A, \tau_{g_A})$ of $(f_A, \tau)$ iff $h_A= k_A \overset{\sim}{\bigcap}g_A$ for some fuzzy soft closed set $k_A$ in $f_A$.
\end{thm}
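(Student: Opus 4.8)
The plan is to prove both implications by passing to complements and invoking the definition of the subspace topology $\tau_{g_A} = \{ g_A \overset{\sim}{\bigcap} m_A \mid m_A \in \tau\}$ together with the De Morgan laws for fuzzy soft sets. The first thing I would do is fix the convention for closedness in the subspace: a fuzzy soft set $p_A \overset{\sim}{\subseteq} g_A$ is declared fuzzy soft closed in $(g_A, \tau_{g_A})$ precisely when its complement \emph{relative to the subspace}, namely $g_A \overset{\sim}{\bigcap} p_A^c$, lies in $\tau_{g_A}$. Pinning this down matters because in the fuzzy setting the ambient complement $p_A^c = 1 - p$ cannot be used naively.

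For the sufficiency, suppose $h_A = k_A \overset{\sim}{\bigcap} g_A$ with $k_A$ fuzzy soft closed in $f_A$, so that $k_A^c \in \tau$. Then $g_A \overset{\sim}{\bigcap} k_A^c \in \tau_{g_A}$ directly by the definition of the subspace topology. I would then verify that $g_A \overset{\sim}{\bigcap} k_A^c$ is exactly the relative complement of $h_A$ in $g_A$: applying De Morgan gives $g_A \overset{\sim}{\bigcap} h_A^c = g_A \overset{\sim}{\bigcap} (k_A \overset{\sim}{\bigcap} g_A)^c = g_A \overset{\sim}{\bigcap} (k_A^c \overset{\sim}{\bigcup} g_A^c)$, and distributing the intersection over the union leaves $g_A \overset{\sim}{\bigcap} k_A^c$ once the stray term $g_A \overset{\sim}{\bigcap} g_A^c$ is reconciled inside $g_A$. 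Hence the relative complement of $h_A$ is fuzzy soft open in the subspace, and $h_A$ is fuzzy soft closed there.

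For the necessity, assume $h_A$ is fuzzy soft closed in $(g_A, \tau_{g_A})$. Then its relative complement belongs to $\tau_{g_A}$, hence has the form $g_A \overset{\sim}{\bigcap} m_A$ for some $m_A \in \tau$. Setting $k_A = m_A^c$, which is fuzzy soft closed in $f_A$, I would recover $h_A = k_A \overset{\sim}{\bigcap} g_A$ by complementing the relation between $h_A$ and its relative complement within $g_A$ and again simplifying with the distributive and De Morgan laws; this direction is the formal mirror of the first.

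The main obstacle is precisely the fuzzy bookkeeping in these complement computations: unlike the crisp case, $g_A \overset{\sim}{\bigcap} g_A^c \neq \overset{\sim}{\Phi_A}$ in general, so the tempting identities $p_A \overset{\sim}{\bigcup} (g_A \overset{\sim}{\bigcap} p_A^c) = g_A$ and $p_A \overset{\sim}{\bigcap} (g_A \overset{\sim}{\bigcap} p_A^c) = \overset{\sim}{\Phi_A}$ both fail. The care required is to perform every complementation relative to $g_A$, the universe of the subspace, rather than relative to $f_A$, and to check the resulting set equalities pointwise on the membership values dictated by $g_A$. Once the relative complement is taken as the defining notion of subspace closedness, the two directions become mirror images and the De Morgan and distributive laws close the argument.
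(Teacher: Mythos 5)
The paper states this theorem (Theorem 2.14) without any proof, so there is nothing to compare your argument against line by line; it can only be judged on its own terms. Your overall strategy --- define subspace-closedness via the relative complement $g_A \overset{\sim}{\bigcap} p_A^c$ and transport the classical argument through the De Morgan laws --- is the natural one, and you correctly identify the danger spot. Unfortunately, that danger spot is not a bookkeeping nuisance that ``pointwise checking'' resolves: it is where the argument actually breaks.

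Concretely, in the sufficiency direction you need
$g_A \overset{\sim}{\bigcap} (k_A \overset{\sim}{\bigcap} g_A)^c = g_A \overset{\sim}{\bigcap} k_A^c$.
De Morgan and distributivity (both valid for $\min$, $\max$, $1-x$) give
$g_A \overset{\sim}{\bigcap} (k_A^c \overset{\sim}{\bigcup} g_A^c) = (g_A \overset{\sim}{\bigcap} k_A^c) \overset{\sim}{\bigcup} (g_A \overset{\sim}{\bigcap} g_A^c)$,
and the stray term is absorbed only if $\min(g,1-g) \leq \min(g,1-k)$ pointwise, i.e.\ only if $k \leq \max(g,1-g)$. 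That is a genuine hypothesis, not a formality: at a point where $g$ has membership value $0.5$ and $k$ has value $1$, one gets $h = k \overset{\sim}{\cap} g$ with value $0.5$, so $g_A \overset{\sim}{\bigcap} h_A^c$ has value $\min(0.5,\max(0,0.5)) = 0.5$, while $g_A \overset{\sim}{\bigcap} k_A^c$ has value $\min(0.5,0)=0$. The two sets are simply different, so you cannot conclude that the relative complement of $h_A$ lies in $\tau_{g_A}$. The necessity direction has the mirror-image failure: recovering $h_A$ as $g_A \overset{\sim}{\bigcap}\bigl(g_A \overset{\sim}{\bigcap} h_A^c\bigr)^c$ requires $\min\bigl(g,\max(1-g,h)\bigr) = h$, which fails e.g.\ at $g=0.5$, $h=0$. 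In short, the classical proof of this theorem uses $Y \cap Y^c = \emptyset$ in an essential way, and that law is exactly what the fuzzy complement lacks; to rescue the statement you would either need a different notion of relative complement (e.g.\ the bounded difference $g - h$), an added hypothesis relating the membership values of $k_A$ and $g_A$, or a definition of ``closed in the subspace'' that does not pass through complements at all. As written, the proof does not close.
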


\begin{thm}
The fuzzy soft closure of a fuzzy soft set $h_A$ in a subspace $(g_A, \tau_{g_A})$ of $(f_A, \tau)$ equals $fscl(h_A) \overset{\sim}{\bigcap}g_A$.
\end{thm}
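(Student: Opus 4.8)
The plan is to prove the asserted equality by establishing the two inclusions separately, relying on the previous theorem (which characterises the fuzzy soft closed sets of a subspace as traces $k_A \overset{\sim}{\bigcap} g_A$ of fuzzy soft closed sets of $f_A$) together with the defining property of $fscl$ as the intersection of all fuzzy soft closed supersets. Write $fscl_{g_A}(h_A)$ for the fuzzy soft closure of $h_A$ computed inside the subspace $(g_A, \tau_{g_A})$; by definition it is the $\overset{\sim}{\bigcap}$ of all fuzzy soft sets that are closed in the subspace and contain $h_A$. Throughout I use the standing fact that $h_A \overset{\sim}{\subseteq} g_A$, which holds because $h_A$ is a fuzzy soft set of the subspace.

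First I would show $fscl_{g_A}(h_A) \overset{\sim}{\subseteq} fscl(h_A) \overset{\sim}{\bigcap} g_A$. Since $fscl(h_A)$ is fuzzy soft closed in $(f_A, \tau)$, the previous theorem guarantees that $fscl(h_A) \overset{\sim}{\bigcap} g_A$ is fuzzy soft closed in the subspace $(g_A, \tau_{g_A})$. Moreover $h_A \overset{\sim}{\subseteq} fscl(h_A)$ and $h_A \overset{\sim}{\subseteq} g_A$, whence $h_A \overset{\sim}{\subseteq} fscl(h_A) \overset{\sim}{\bigcap} g_A$. Thus $fscl(h_A) \overset{\sim}{\bigcap} g_A$ is a fuzzy soft closed set of the subspace containing $h_A$, and as $fscl_{g_A}(h_A)$ is the intersection of a family that includes this set, it is contained in it.

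For the reverse inclusion I would invoke the previous theorem once more: since $fscl_{g_A}(h_A)$ is fuzzy soft closed in the subspace, there is a fuzzy soft closed set $k_A$ of $f_A$ with $fscl_{g_A}(h_A) = k_A \overset{\sim}{\bigcap} g_A$. From $h_A \overset{\sim}{\subseteq} fscl_{g_A}(h_A) \overset{\sim}{\subseteq} k_A$ and the minimality of $fscl(h_A)$ among fuzzy soft closed supersets of $h_A$, I obtain $fscl(h_A) \overset{\sim}{\subseteq} k_A$. Intersecting with $g_A$ gives $fscl(h_A) \overset{\sim}{\bigcap} g_A \overset{\sim}{\subseteq} k_A \overset{\sim}{\bigcap} g_A = fscl_{g_A}(h_A)$. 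Combining the two inclusions yields the claimed equality.

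The argument is essentially formal, and the only step deserving care is the minimality claim — that every fuzzy soft closed set containing $h_A$ must contain $fscl(h_A)$ — which I expect to be the main (mild) obstacle. It follows directly from the definition of fuzzy soft closure as the intersection of all such closed supersets, but it should be stated explicitly rather than taken for granted. The other place to be attentive is the implicit use of $h_A \overset{\sim}{\subseteq} g_A$ in the forward inclusion: this is exactly what lets $h_A$ sit inside $fscl(h_A) \overset{\sim}{\bigcap} g_A$ rather than merely inside $fscl(h_A)$.
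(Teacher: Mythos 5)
Your proposal is correct and follows essentially the same route as the paper's own proof: both directions use the characterisation of subspace-closed sets as traces $k_A \overset{\sim}{\bigcap} g_A$ of closed sets of $f_A$ together with the minimality of the closure. The only difference is that you spell out the step $h_A \overset{\sim}{\subseteq} fscl(h_A) \overset{\sim}{\bigcap} g_A$ more carefully than the paper does, which is a minor improvement rather than a different argument.
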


\begin{proof}
We know $fsclh_A$ is a fuzzy soft closed set in $f_A \Rightarrow fsclh_A \overset{\sim}{\bigcap}g_A$ is fuzzy soft closed set in $g_A$. Now $h_A \overset{\sim}{\subset} fsclh_A \overset{\sim}{\bigcap}g_A$ and fuzzy soft closure of $h_A$ in $g_A$ is the smallest fuzzy closed set containing $h_A$, so fuzzy soft closure of $h_A$ in $g_A$ is contained in $fsclh_A \overset{\sim}{\bigcap}g_A$.\\
On the other hand, if $w_A$ denotes the fuzzy soft closure of $h_A$ in $g_A$, then $w_A$ is a fuzzy soft closed set in $g_A \Rightarrow ~w_A = k_A \overset{\sim}{\bigcap}g_A$ where $k_A$ is a fuzzy soft closed set in $f_A$(by theorem 2.14). Then $k_A$ is fuzzy soft closed containing $h_A \Rightarrow fsclh_A \overset{\sim}{\subset} k_A \Rightarrow fsclh_A \overset{\sim}{\bigcap} g_A \overset{\sim}{\subset} k_A \overset{\sim}{\bigcap} g_A =w_A$. 
\end{proof}

\section{Fuzzy Soft separation axioms}

Here, we introduce and study various separation axioms for a fuzzy soft topological space. 

\begin{defn}
 A fuzzy soft topological space $(f_A, \tau)$ is said to be a fuzzy soft $T_0-$ space if for every pair of disjoint fuzzy soft points $e_{h_A}, e_{g_B}, \exists$
 a fuzzy soft open set containing one but not the other.
\end{defn}

\begin{exm}
A discrete fuzzy soft topological space is a fuzzy soft $T_0-$ space since every $e_{h_A}$ is a fuzzy soft open set in the discrete space.
\end{exm}

\begin{thm}
 A fuzzy soft subspace of a fuzzy soft $T_0-$ space is fuzzy soft $T_0$.
\end{thm}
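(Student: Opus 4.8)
The plan is to adapt the standard restriction argument from point-set topology: a separating fuzzy soft open set in the ambient space, intersected with the subspace, stays open in the subspace and continues to separate the two points. First I would fix notation carefully, since the symbol $g_A$ is already being used for the subspace. Let $(g_A,\tau_{g_A})$ be a fuzzy soft subspace of the fuzzy soft $T_0$-space $(f_A,\tau)$, and let $e_{p_A}$ and $e_{q_B}$ be an arbitrary pair of disjoint fuzzy soft points of $g_A$. Since every fuzzy soft set of the subspace satisfies $\overset{\sim}{\subseteq}\, g_A \overset{\sim}{\subseteq} f_A$, these two points are in particular disjoint fuzzy soft points of the ambient space $(f_A,\tau)$.

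Second, I would invoke the $T_0$ hypothesis on $(f_A,\tau)$: there exists a fuzzy soft open set $h_A\in\tau$ containing exactly one of the two points, say $e_{p_A}\overset{\sim}{\in}h_A$ while $e_{q_B}\overset{\sim}{\notin}h_A$ (the opposite case being symmetric). The natural candidate for a separating set in the subspace is then $h_A\overset{\sim}{\bigcap}g_A$, which belongs to $\tau_{g_A}$ immediately by the definition of the fuzzy soft subspace topology (Definition~1.4).

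Third, I would verify that $h_A\overset{\sim}{\bigcap}g_A$ still separates the pair. For the included point: $e_{p_A}$ is a point of the subspace, so $e_{p_A}\overset{\sim}{\in}g_A$, and by choice $e_{p_A}\overset{\sim}{\in}h_A$; Theorem~2.5(vi) (membership in an intersection) then yields $e_{p_A}\overset{\sim}{\in}h_A\overset{\sim}{\bigcap}g_A$. For the excluded point: since $e_{q_B}\overset{\sim}{\notin}h_A$ and $h_A\overset{\sim}{\bigcap}g_A\overset{\sim}{\subseteq}h_A$, the contrapositive of Theorem~2.5(vi) forces $e_{q_B}\overset{\sim}{\notin}h_A\overset{\sim}{\bigcap}g_A$. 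Thus $h_A\overset{\sim}{\bigcap}g_A$ is a fuzzy soft open set of $(g_A,\tau_{g_A})$ containing $e_{p_A}$ but not $e_{q_B}$, so the subspace is fuzzy soft $T_0$.

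The argument is essentially routine, so I do not anticipate a deep obstacle; the only step demanding care is the exclusion step, where one must confirm that intersecting with $g_A$ cannot accidentally draw the excluded point inside. This rests entirely on the pointwise ($\min$) nature of fuzzy soft intersection together with the membership criterion $e_{p_A}\overset{\sim}{\in}h_A\iff p(e)\le h(e)$: passing from $h_A$ to $h_A\overset{\sim}{\bigcap}g_A$ can only lower membership values, so a point failing $q(e)\le h(e)$ certainly fails $q(e)\le\min\!\big(h(e),g(e)\big)$, which is exactly what the cited property packages.
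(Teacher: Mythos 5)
Your proposal is correct and takes essentially the same route as the paper: pass the two points up to the ambient $T_0$-space, obtain a separating open set $h_A\in\tau$, and use $h_A\overset{\sim}{\bigcap}g_A\in\tau_{g_A}$ as the separating set in the subspace. The only difference is that you explicitly verify both the inclusion and the exclusion for the intersected set, which the paper simply asserts.
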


\begin{proof}
Let $(g_A, \tau_{g_A})$ be a fuzzy soft subspace of a fuzzy soft $T_0-$ space $(f_A, \tau)$ and let $e_{k1_B}, e_{k2_B}$ be two distinct fuzzy soft points of $g_A$. Then these fuzzy soft points are also in $f_A \Rightarrow ~\exists$ a fuzzy soft open set $h_A$ containing one fuzzy soft point but not the other $\Rightarrow g_A \overset{\sim}{\bigcap} h_A$,where $h_A \in \tau$ is a fuzzy soft open set in $\tau_{g_A}$ containing one fuzzy soft point but not the other.
\end{proof}

\begin{defn}
 A fuzzy soft topological space $(f_A, \tau)$ is said to be a fuzzy soft $T_1-$ space if for distinct pair of fuzzy soft points $e_{g_A},  e_{k_A}$ of  $f_A,~ \exists$ fuzzy soft open sets $s_A$ and $h_A$ such that \\
 $e_{g_A} \overset{\sim}{\in} s_A$ and $e_{g_A} \overset{\sim}{\notin} h_A$;\\
 $e_{k_A} \overset{\sim}{\in} h_A$ and $e_{k_A} \overset{\sim}{\notin} s_A$.
\end{defn}

\begin{thm}
If every fuzzy soft point of a fuzzy soft topological space $(f_A, \tau)$ is fuzzy soft closed then $(f_A, \tau)$ is fuzzy soft $T_1$.
\end{thm}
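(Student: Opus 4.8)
The plan is to transplant the classical argument that ``every point closed $\Rightarrow T_1$''. Given a distinct pair of fuzzy soft points $e_{g_A}$ and $e_{k_A}$ (both active at the parameter $e$, as the notation in Definition 3.4 indicates), I would produce the two separating fuzzy soft open sets required by Definition 3.4 by taking the complements of the points themselves: put $s_A = (e_{k_A})^c$ and $h_A = (e_{g_A})^c$. By hypothesis every fuzzy soft point is fuzzy soft closed, so each of $s_A$ and $h_A$ is fuzzy soft open and hence an admissible candidate. It then remains to verify the four membership conditions of Definition 3.4.

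Two of the four are immediate. Each point lies in itself, so $e_{g_A} \overset{\sim}{\in} e_{g_A}$ and $e_{k_A} \overset{\sim}{\in} e_{k_A}$; applying Theorem 2.5(i) with the ambient set taken to be the point itself then yields $e_{g_A} \overset{\sim}{\notin} (e_{g_A})^c = h_A$ and $e_{k_A} \overset{\sim}{\notin} (e_{k_A})^c = s_A$. These are precisely the two non-membership requirements $e_{g_A} \overset{\sim}{\notin} h_A$ and $e_{k_A} \overset{\sim}{\notin} s_A$, obtained at no cost.

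The remaining two, $e_{g_A} \overset{\sim}{\in} s_A = (e_{k_A})^c$ and $e_{k_A} \overset{\sim}{\in} h_A = (e_{g_A})^c$, are the step I expect to be the genuine obstacle. Unwinding Definition 2.4 and Definition 2.2, the first is equivalent to $g(e) \leq 1 - k(e)$, i.e. $g(e) + k(e) \leq 1$ pointwise on $U$, and the second to the symmetric inequality. Unlike the crisp case, mere distinctness of $e_{g_A}$ and $e_{k_A}$ does not force either inequality; this is exactly the failure of membership to respect complementation flagged by Theorem 2.5(ii), so the naive choice of separating sets can break down precisely here.

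To finish, I would make precise what ``distinct pair'' means in Definition 3.4. Reading it as the two points being \emph{disjoint} in the sense of Definition 1.2 gives $g(e) \cap k(e) = \overset{\sim}{\Phi}$, which forces $g(e) + k(e) \leq 1$; both cross-memberships then hold and all four conditions of Definition 3.4 are satisfied, closing the proof. Absent such a hypothesis, the sets $s_A$ and $h_A$ need not separate the points, and one would have to hunt for a different separating pair or else conclude that the implication genuinely requires the disjointness assumption. Deciding between these alternatives is the crux of the argument.
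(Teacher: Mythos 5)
Your route is genuinely different from the paper's, and you have correctly located the real difficulty (the cross-memberships $e_{g_A} \overset{\sim}{\in} (e_{k_A})^c$ and $e_{k_A} \overset{\sim}{\in} (e_{g_A})^c$), but as written the argument has a gap that excludes precisely the case the paper treats. The paper's proof takes the two fuzzy soft points to be supported at \emph{distinct} parameters $e_j \neq e_m$ and separates them by the complements of auxiliary points $e_{l_A}, e_{t_A}$ chosen to dominate the given ones, via a case split on whether the membership values are $\leq 0.5$ or $>0.5$. You instead assume both points are supported at the same parameter $e$. Under Definition 2.2 the complement of a fuzzy soft point is again supported only at that single parameter, so if $e_{g_A}$ lives at $e_j$ and $e_{k_A}$ at $e_m \neq e_j$, then $s_A=(e_{k_A})^c$ takes the value $\overset{\sim}{\Phi}$ at $e_j$, and Definition 2.4 forces $e_{g_A} \overset{\sim}{\notin} s_A$ because $g(e_j)\neq \overset{\sim}{\Phi}$. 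Your candidate separating sets therefore fail outright for points at different parameters, and that case needs its own construction.

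Within the same-parameter case your reasoning is sound granting the paper's own conventions: citing Theorem 2.5(i) for the two non-membership conditions is legitimate, and your observation that mere distinctness does not yield $g(e)+k(e)\leq 1$ --- so that ``distinct'' must be strengthened to ``disjoint'' in the sense of Definition 1.2 --- is an honest diagnosis of a real defect in the statement. (The paper's own proof does not resolve this cleanly either: its case analysis omits the mixed case $\alpha_i \leq 0.5 < \beta_i$, and the asserted implication $\alpha_i \leq \gamma_i \Rightarrow \alpha_i \leq 1-\gamma_i$ needs the extra constraint $\gamma_i \leq 1-\alpha_i$.) Still, to count as a proof of the theorem as stated you must either handle arbitrary distinct points --- in particular those at different parameters --- or explicitly present the disjointness assumption as a correction to the hypothesis; at present the write-up does neither, and you yourself leave the decision between these alternatives open in your final sentence.
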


\begin{proof}
Let $e_{h_A} =\{e_j=\{h^i_{\alpha_i}~|~ i=1, 2, . . . ,n\}\}, e_{k_A}=\{e_m=\{h^i_{\beta_i}~|~ i=1, 2, . . . ,n\}\}$, where $e_j, e_m$ are distinct parameters be distinct fuzzy soft point of $f_A$. 
\begin{enumerate}
\item $\alpha_i, \beta_i \leq 0.5$.\\
Then we can always find some $\gamma_i$ and $\delta_i$ such that $\alpha_i \leq \gamma_i , \beta_i \leq \delta_i \Rightarrow \alpha_i \leq 1-\gamma_i , \beta_i \leq 1-\delta_i \Rightarrow $ the fuzzy soft sets $e_{l_A} =\{e_j=\{h^i_{\gamma_i}~|~ i=1, 2, . . . ,n\}\}$ and $e_{t_A}=\{e_m=\{h^i_{\delta_i}~|~ i=1, 2, . . . ,n\}\}$ are such that their complements are disjoint fuzzy soft open sets containing $e_{h_A}$ and $e_{k_A}$ respectively. 
\item $\alpha_i, \beta_i > 0.5$.\\
Then we can always find some $\gamma_i$ and $\delta_i$ such that $\gamma_i \leq \alpha_i , \delta_i \leq \beta_i \Rightarrow \alpha_i \leq 1-\gamma_i , \beta_i \leq 1-\delta_i \Rightarrow $ the fuzzy soft sets $e_{l_A} =\{e_j=\{h^i_{\gamma_i}~|~ i=1, 2, . . . ,n\}\}$ and $e_{t_A}=\{e_m=\{h^i_{\delta_i}~|~ i=1, 2, . . . ,n\}\}$ are such that their complements are disjoint fuzzy soft open sets containing $e_{h_A}$ and $e_{k_A}$ respectively. 

\end{enumerate}
\end{proof}

\begin{thm}
 A fuzzy soft subspace of a fuzzy soft $T_1-$ space is fuzzy soft $T_1$.
\end{thm}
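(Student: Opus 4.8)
The plan is to mirror the argument used for the $T_0$ case in Theorem 3.3, exploiting that the fuzzy soft open sets of the subspace $(g_A, \tau_{g_A})$ are precisely the traces $h_A \overset{\sim}{\bigcap} g_A$ with $h_A \in \tau$. First I would take $(g_A, \tau_{g_A})$ to be a fuzzy soft subspace of a fuzzy soft $T_1$-space $(f_A, \tau)$ and fix two distinct fuzzy soft points $e_{p_A}, e_{q_A}$ of $g_A$. Since $g_A \overset{\sim}{\subseteq} f_A$, these are also distinct fuzzy soft points of $f_A$, so by Definition 3.4 the $T_1$ property of $(f_A, \tau)$ supplies fuzzy soft open sets $s_A, h_A \in \tau$ with $e_{p_A} \overset{\sim}{\in} s_A$, $e_{p_A} \overset{\sim}{\notin} h_A$, $e_{q_A} \overset{\sim}{\in} h_A$ and $e_{q_A} \overset{\sim}{\notin} s_A$.

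Next I would propose $s_A \overset{\sim}{\bigcap} g_A$ and $h_A \overset{\sim}{\bigcap} g_A$ as the separating sets in the subspace; these are fuzzy soft open in $\tau_{g_A}$ by the definition of the subspace topology. The verification splits into a membership and a non-membership part. For membership, since $e_{p_A} \overset{\sim}{\in} g_A$ and $e_{p_A} \overset{\sim}{\in} s_A$, property (vi) of Theorem 2.5 gives $e_{p_A} \overset{\sim}{\in} s_A \overset{\sim}{\bigcap} g_A$, and symmetrically $e_{q_A} \overset{\sim}{\in} h_A \overset{\sim}{\bigcap} g_A$. For non-membership, I would use $h_A \overset{\sim}{\bigcap} g_A \overset{\sim}{\subseteq} h_A$: if $e_{p_A}$ lay in $h_A \overset{\sim}{\bigcap} g_A$ it would lie in the larger set $h_A$, contradicting $e_{p_A} \overset{\sim}{\notin} h_A$; hence $e_{p_A} \overset{\sim}{\notin} h_A \overset{\sim}{\bigcap} g_A$, and symmetrically $e_{q_A} \overset{\sim}{\notin} s_A \overset{\sim}{\bigcap} g_A$. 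These are exactly the two open sets demanded by Definition 3.4, so $(g_A, \tau_{g_A})$ is fuzzy soft $T_1$.

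The only step that needs genuine care, and which I would flag as the main (though minor) obstacle, is the inheritance of non-membership by the trace: one must check that intersecting $h_A$ with $g_A$ cannot introduce a membership that was not present in $h_A$. This follows from monotonicity of $\overset{\sim}{\in}$ under $\overset{\sim}{\subseteq}$, which at the level of the defining inequality of Definition 2.4 is simply $h(e) \wedge g(e) \leq h(e)$; everything else is a routine transcription of the classical $T_1$-subspace argument.
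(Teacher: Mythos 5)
Your proof is correct; the paper actually states this theorem without any proof, and your argument is precisely the routine adaptation of the paper's own proof of the corresponding $T_0$ subspace theorem (take the traces $s_A \overset{\sim}{\bigcap} g_A$ and $h_A \overset{\sim}{\bigcap} g_A$ of the separating open sets). Your added care about non-membership being inherited by the trace --- via $h_A \overset{\sim}{\bigcap} g_A \overset{\sim}{\subseteq} h_A$ and monotonicity of $\overset{\sim}{\in}$ --- is a point the paper glosses over even in its $T_0$ argument, so nothing is missing.
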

\begin{defn}
 A fuzzy soft topological space $(f_A, \tau)$ is said to be a fuzzy soft $T_2-$ space if and only if for distinct fuzzy soft points $e_{g_A}, e_{k_A}$ of $f_A, \exists$ disjoint fuzzy soft open sets $h_A$ and $s_A$ such that $e_{g_A}\overset{\sim}{\in} h_A$ and $e_{k_A} \overset{\sim}{\in} s_A$. 
\end{defn}
\begin{thm}
If every fuzzy soft point of a fuzzy soft topological space $(f_A, \tau)$ is fuzzy soft closed then $(f_A, \tau)$ is fuzzy soft $T_2$.
\end{thm}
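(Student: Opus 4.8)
The plan is to reuse the construction from the proof of Theorem 3.5 (the fuzzy soft $T_1$ result) essentially verbatim, since the auxiliary fuzzy soft sets built there are already disjoint fuzzy soft open sets; the only thing to add is to read off that this is exactly the separation demanded by Definition 3.7. The engine behind this is Definition 2.2: the complement of a fuzzy soft point is again a fuzzy soft point supported at the \emph{same} single parameter, so closedness of points (the hypothesis) converts each point into an open point-complement, and point-complements at different parameters are automatically disjoint.

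Concretely, I would start with two distinct fuzzy soft points $e_{g_A}$ and $e_{k_A}$, supported at parameters $e_j$ and $e_m$ with membership values $\alpha_i$ and $\beta_i$ on the universe elements $h^i$. I would then pick auxiliary points $e_{l_A}$ at $e_j$ (values $\gamma_i$) and $e_{t_A}$ at $e_m$ (values $\delta_i$) satisfying $\alpha_i \le 1-\gamma_i$ and $\beta_i \le 1-\delta_i$, arranged by the same two-case split on whether the memberships lie below or above $0.5$. These inequalities give $e_{g_A} \overset{\sim}{\in} (e_{l_A})^c$ and $e_{k_A} \overset{\sim}{\in} (e_{t_A})^c$; the hypothesis makes both complements fuzzy soft open; and because one is supported only at $e_j$ and the other only at $e_m$ with $e_j \ne e_m$, Definition 2.2 forces their images to miss each other at every pair of parameters, so they are disjoint in the sense of Definition 1.2. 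That is precisely fuzzy soft $T_2$.

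The step I expect to be the real obstacle is not the generic construction but the tacit hypothesis that the two points sit at \emph{different} parameters. If $e_j = e_m$ and the underlying fuzzy sets $g(e_j)$ and $k(e_j)$ overlap on some $h^i$, then any fuzzy soft open sets containing the two points must both be nonzero at $h^i$ under $e_j$, so no disjoint pair can exist and the conclusion fails; the point-complement trick that drives the different-parameter case produces two sets living at the \emph{same} parameter and yields no disjointness. I would therefore either restrict the statement to points at distinct parameters (as Theorem 3.5 tacitly does) or strengthen the meaning of ``distinct'' accordingly. A smaller, purely bookkeeping gap is the mixed regime $\alpha_i \le 0.5 < \beta_i$, not literally covered by the two displayed cases; it is repaired by choosing $\gamma_i,\delta_i$ independently for each index $i$.
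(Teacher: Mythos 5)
The paper states this theorem without proof, evidently intending the construction of the preceding $T_1$ theorem to be reused verbatim, and that is exactly what you propose; so your route coincides with the paper's. Your diagnosis of the weak points is also accurate and applies equally to the paper's own $T_1$ proof: the argument tacitly assumes the two fuzzy soft points are supported at \emph{distinct} parameters (and the conclusion genuinely fails for overlapping points at the same parameter), and the two displayed cases omit the mixed regime, which you repair correctly by choosing $\gamma_i,\delta_i$ index by index.

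One claim in your write-up is, however, not right as stated, and it is the claim carrying all the weight for $T_2$. You assert that because $(e_{l_A})^c$ is supported only at $e_j$ and $(e_{t_A})^c$ only at $e_m$, their images ``miss each other at every pair of parameters,'' so they are disjoint in the sense of Definition 1.2. But Definition 1.2 quantifies over \emph{all} pairs $(a,b)$, including $(e_j,e_m)$, where the images are $\{h^i_{1-\gamma_i}\}$ and $\{h^i_{1-\delta_i}\}$; their fuzzy intersection has membership $\min(1-\gamma_i,1-\delta_i)$, which vanishes only if $\gamma_i=1$ or $\delta_i=1$ for every $i$ --- and $\gamma_i=1$ forces $\alpha_i\leq 1-\gamma_i=0$, destroying the containment $e_{g_A}\overset{\sim}{\in}(e_{l_A})^c$ whenever $\alpha_i>0$. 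What is true is that the parameter-wise intersection $(e_{l_A})^c\overset{\sim}{\bigcap}(e_{t_A})^c$ is the null fuzzy soft set, since at each single parameter at least one of the two equals $\overset{\sim}{\Phi}$. So your argument (and the paper's $T_1$ argument, which makes the identical disjointness claim) goes through only if ``disjoint'' in Definition 3.7 is read as null parameter-wise intersection rather than as the literal Definition 1.2; you should say explicitly which notion you are using, because under the literal one the constructed open sets are not disjoint and the proof does not close.
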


\begin{thm}
A fuzzy soft subspace of a fuzzy soft $T_2-$ space is fuzzy soft $T_2$.
\end{thm}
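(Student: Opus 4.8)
The plan is to mimic the subspace argument already used for the $T_0$ case (Theorem 3.3): restrict the separating open sets furnished by the ambient space to $g_A$, and then check that both the membership relations and the disjointness survive this restriction.

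First I would let $(g_A, \tau_{g_A})$ be a fuzzy soft subspace of the fuzzy soft $T_2$-space $(f_A, \tau)$ and fix two distinct fuzzy soft points $e_{p_A}, e_{q_A}$ of $g_A$. Since $g_A \overset{\sim}{\subseteq} f_A$, any fuzzy soft point lying in $g_A$ also lies in $f_A$; hence $e_{p_A}, e_{q_A}$ are distinct fuzzy soft points of $f_A$ as well. Invoking the $T_2$ property of $(f_A, \tau)$, I obtain disjoint fuzzy soft open sets $h_A, s_A \in \tau$ with $e_{p_A} \overset{\sim}{\in} h_A$ and $e_{q_A} \overset{\sim}{\in} s_A$.

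The candidate separating sets in the subspace are $g_A \overset{\sim}{\bigcap} h_A$ and $g_A \overset{\sim}{\bigcap} s_A$, both of which are fuzzy soft open in $\tau_{g_A}$ directly from the definition of the subspace topology (Definition 1.4). It then remains to check two things. For membership: since $e_{p_A}$ lies in both $g_A$ and $h_A$, Theorem 2.5(vi) yields $e_{p_A} \overset{\sim}{\in} g_A \overset{\sim}{\bigcap} h_A$, and symmetrically $e_{q_A} \overset{\sim}{\in} g_A \overset{\sim}{\bigcap} s_A$.

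The step carrying the real content is disjointness, and I expect it to be the only place where any verification is actually needed. From $h_A \overset{\sim}{\bigcap} s_A = \overset{\sim}{\Phi}$ I would deduce $(g_A \overset{\sim}{\bigcap} h_A) \overset{\sim}{\bigcap} (g_A \overset{\sim}{\bigcap} s_A) \overset{\sim}{\subseteq} h_A \overset{\sim}{\bigcap} s_A = \overset{\sim}{\Phi}$, so that the two subspace-open sets are disjoint in the sense of Definition 1.2. Parameterwise this amounts to the trivial fuzzy inequality $\min(g(e), h(e), g(e), s(e)) \leq \min(h(e), s(e))$, since fuzzy intersection is pointwise minimum, so no genuine obstruction arises. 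With membership and disjointness both established, $(g_A, \tau_{g_A})$ satisfies the $T_2$ axiom, completing the proof.
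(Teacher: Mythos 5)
Your proposal is correct; note that the paper actually states this theorem without any proof, so the only available comparison is with its proof of the analogous $T_0$ result (the subspace theorem for $T_0$-spaces), and your argument is precisely that restriction argument carried out with the extra bookkeeping the $T_2$ case demands: intersect the ambient separating open sets with $g_A$, check membership via Theorem 2.5(vi), and check that disjointness survives by monotonicity of intersection. The one point worth a second glance is that Definition 1.2 defines disjointness of $f_A$ and $g_B$ by $f(a)\cap g(b)=\overset{\sim}{\Phi}$ for \emph{all} pairs of parameters $a,b$, not just equal ones, so your parameterwise inequality should be stated for a pair $(a,b)$ rather than a single $e$; the same monotonicity estimate $\min(g(a),h(a))\cap\min(g(b),s(b))\leq h(a)\cap s(b)=\overset{\sim}{\Phi}$ closes this immediately, so it is a cosmetic rather than a substantive gap.
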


\begin{thm}
A fuzzy soft topological space $(f_A, \tau)$ is fuzzy soft $T_2$ if and only if for distinct fuzzy soft points $e_{g_A}, e_{k_A}$ of $f_A, \exists$ a fuzzy soft open set $s_A$ containing $e_{g_A}$ but not $e_{k_A}$ such that $e_{k_A} \overset{\sim}{\notin} fscls_A$
\end{thm}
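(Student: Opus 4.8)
The plan is to mirror the classical characterisation of Hausdorffness through closures: on one side disjoint fuzzy soft open neighbourhoods of the two points, on the other a fuzzy soft open set around the first point whose closure avoids the second. The translation will be powered by the closure operator of Definition 2.11, the complement identities of Theorem 2.13, and the membership facts of Theorem 2.5.

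For the forward implication, I would take distinct fuzzy soft points $e_{g_A}, e_{k_A}$ and use the fuzzy soft $T_2$ axiom to produce disjoint fuzzy soft open sets $s_A$ and $t_A$ with $e_{g_A} \overset{\sim}{\in} s_A$ and $e_{k_A} \overset{\sim}{\in} t_A$. The first step is to convert disjointness into an inclusion: since $s(e') \cap t(e') = \overset{\sim}{\Phi}$ forces one of the two membership values to vanish at every point, one obtains $s(e') \leq 1 - t(e') = t^c(e')$ for each parameter $e'$, i.e. $s_A \overset{\sim}{\subseteq} t_A^c$. As $t_A^c$ is fuzzy soft closed and $fscl(s_A)$ is by definition the smallest fuzzy soft closed set containing $s_A$, monotonicity yields $fscl(s_A) \overset{\sim}{\subseteq} t_A^c$. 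Finally, from $e_{k_A} \overset{\sim}{\in} t_A$ and Theorem 2.5(i) we get $e_{k_A} \overset{\sim}{\notin} t_A^c$, hence $e_{k_A} \overset{\sim}{\notin} fscl(s_A)$ and a fortiori $e_{k_A} \overset{\sim}{\notin} s_A$; so $s_A$ is the required open set.

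For the converse, I would take the open set $s_A$ supplied by the hypothesis, with $e_{g_A} \overset{\sim}{\in} s_A$ and $e_{k_A} \overset{\sim}{\notin} fscl(s_A)$, and set $t_A = (fscl(s_A))^c$, which is fuzzy soft open as the complement of a fuzzy soft closed set. Since $s_A \overset{\sim}{\subseteq} fscl(s_A)$ we get $t_A \overset{\sim}{\subseteq} s_A^c$, which is exactly what is needed to verify that $s_A$ and $t_A$ are disjoint in the sense of Definition 1.2. It then remains to show $e_{k_A} \overset{\sim}{\in} t_A$, so that $s_A$ and $t_A$ separate the two points and the space is fuzzy soft $T_2$.

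That last step is where I expect the genuine difficulty. Passing from $e_{k_A} \overset{\sim}{\notin} fscl(s_A)$ to $e_{k_A} \overset{\sim}{\in} (fscl(s_A))^c$ is precisely a membership-to-complement transfer, and Theorem 2.5(ii) together with Example 2.6 warns that such transfers fail for fuzzy soft points in general: not lying below a fuzzy soft set does not force lying below its complement. The crux is therefore to argue this implication directly at the level of the active parameter $e$ and the membership function $k(e)$, exploiting the single-active-parameter structure of a fuzzy soft point and, if necessary, the same threshold split around $0.5$ used in the proof of Theorem 3.5, rather than appealing to a formal ``not-in-implies-in-complement'' rule. The disjointness check and the closure monotonicity are routine by comparison.
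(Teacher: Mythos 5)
Your forward direction is complete and in fact more careful than the paper's: the paper takes the two disjoint fuzzy soft open sets $h_A$ (containing $e_{k_A}$) and $b_A$ (containing $e_{g_A}$) and then offers $h_A^c$ as the required open set $s_A$, but $h_A^c$ is the complement of an open set, hence fuzzy soft closed, and nothing makes it open; you instead keep the genuinely open set containing $e_{g_A}$, push its closure inside $t_A^c$ by minimality of the closure, and read off $e_{k_A} \overset{\sim}{\notin} fscl(s_A)$. That half stands (modulo the paper's own Theorem 2.5(i), which you cite).

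The converse, however, is not finished, and the step you flag as the ``genuine difficulty'' is exactly the step the paper asserts without justification: $e_{k_A} \overset{\sim}{\notin} fscl(s_A) \Rightarrow e_{k_A} \overset{\sim}{\in} (fscl(s_A))^c$. This transfer is false for fuzzy soft points in general: if $fscl(s_A)(e)$ takes the constant value $0.6$ on $U$ and $k(e)$ takes the value $0.7$ at a single element of $U$, then $k(e) \not\leq fscl(s_A)(e)$, so $e_{k_A} \overset{\sim}{\notin} fscl(s_A)$, yet $k(e) \not\leq 0.4 = 1 - 0.6$, so $e_{k_A} \overset{\sim}{\notin} (fscl(s_A))^c$ as well. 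Your plan correctly locates the obstruction but does not remove it, so the converse remains open as written. A second, smaller gap: you verify disjointness of $s_A$ and $t_A = (fscl(s_A))^c$ from the inclusion $t_A \overset{\sim}{\subseteq} s_A^c$, but that inclusion only yields $s(e')(x) + t(e')(x) \leq 1$, whereas Definition 1.2 demands $s(a) \cap t(b) = \overset{\sim}{\Phi}$, i.e. $\min(s(a)(x), t(b)(x)) = 0$ for all parameters and all $x$; membership values of $0.5$ each satisfy the former and violate the latter. The paper's proof makes both of these leaps silently, so your diagnosis of where the theorem is fragile is accurate, but a diagnosis is not yet a proof.
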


\begin{proof}
($\Rightarrow$)
Let $(f_A, \tau)$ be fuzzy soft $T_2$ and $e_{g_A}, e_{k_A}$ be distinct fuzzy soft points. So $\exists$ distinct fuzzy soft open sets $h_A$ and $b_A$ such that $e_{k_A} \overset{\sim}{\in} h_A,~ e_{g_A} \overset{\sim}{\in} b_A \Rightarrow e_{g_A} \overset{\sim}{\in} h_A^c$. So $h_A^c$ is a fuzzy soft open set containing $e_{g_A}$ but not  $e_{k_A}$ and $fsclh_A^c = h_A^c$.
($\Leftarrow$)
 Take a pair of distinct fuzzy soft points $e_{g_A}$ and $e_{k_A}$ of $f_A, \exists$ a fuzzy soft open set $s_A$ containing $e_{g_A}$ but not $e_{k_A}$ such that $e_{k_A} \overset{\sim}{\notin} fscls_A \Rightarrow e_{k_A} \overset{\sim}{\in} (fscls_A)^c \Rightarrow s_A $ and $(fscls_A)^c$ are disjoint fuzzy soft open set containing $e_{g_A}$ and $e_{k_A}$ respectively.

\end{proof}

\begin{defn}
 A fuzzy soft topological space $(f_A, \tau)$ is said to be a fuzzy soft regular space if for every fuzzy soft point $e_{h_A}$  and fuzzy soft closed set $k_A$ not containing $e_{h_A},~ \exists$ disjoint fuzzy soft open sets $g_{1A}, g_{2A}$ such that $e_{h_A} \overset{\sim}{\in} g_{1A}$ and $k_A \overset{\sim}{\subseteq} g_{2A}$.

 A fuzzy soft regular $T_1-$ space is called a fuzzy soft $T_3-$ space, 
\end{defn}

\begin{rem}
It can be shown that the property of being fuzzy soft $T_3$ is hereditary. 
\end{rem}

\begin{rem}
Every fuzzy soft $T_3-$ space is fuzzy soft $T_2-$ space, every fuzzy soft $T_2-$ space is fuzzy soft $T_1-$ space and every  fuzzy soft $T_1-$ space is fuzzy soft $T_0-$ space. 
\end{rem}
 
\begin{thm}
 A fuzzy soft topological space $(f_A, \tau)$ in which every fuzzy soft point is fuzzy soft closed, is fuzzy soft regular iff for a fuzzy soft open set $g_A$ containing a fuzzy soft point $e_{h_A}$, there exists a fuzzy soft open set $s_A$ containing $e_{h_A}$ such that $fscls_A \overset{\sim}{\subset} g_A$. 
\end{thm}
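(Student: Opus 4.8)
My plan is to prove the two implications separately; this is the fuzzy soft version of the classical fact that a space with closed points is fuzzy soft regular precisely when every point possesses a neighborhood base of fuzzy soft closed sets. Throughout I would manipulate complements exactly as in the proof of Theorem 3.10: treating the complement of a fuzzy soft open set as fuzzy soft closed and conversely, using that complementation reverses $\overset{\sim}{\subseteq}$ and is an involution, and adopting the same convention fixed there that a fuzzy soft point failing to lie in a fuzzy soft set lies in its complement, together with the fact that a fuzzy soft set and the complement of its fuzzy soft closure are disjoint.

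For the forward direction, assume $(f_A,\tau)$ is fuzzy soft regular and let $g_A$ be fuzzy soft open with $e_{h_A}\overset{\sim}{\in}g_A$. I would pass to $g_A^{c}$, which is fuzzy soft closed, and observe by Theorem 2.5(i) that $e_{h_A}\overset{\sim}{\notin}g_A^{c}$. Applying regularity (Definition 3.11) to the point $e_{h_A}$ and the fuzzy soft closed set $g_A^{c}$ yields disjoint fuzzy soft open sets $g_{1A},g_{2A}$ with $e_{h_A}\overset{\sim}{\in}g_{1A}$ and $g_A^{c}\overset{\sim}{\subseteq}g_{2A}$. Setting $s_A=g_{1A}$, disjointness gives $s_A\overset{\sim}{\subseteq}g_{2A}^{c}$; since $g_{2A}^{c}$ is fuzzy soft closed, Definition 2.11 forces $fscls_A\overset{\sim}{\subseteq}g_{2A}^{c}$. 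Finally $g_A^{c}\overset{\sim}{\subseteq}g_{2A}$ gives $g_{2A}^{c}\overset{\sim}{\subseteq}g_A$ on taking complements, so $fscls_A\overset{\sim}{\subseteq}g_A$, as required.

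For the converse, assume the stated condition and let $e_{h_A}$ be a fuzzy soft point and $k_A$ a fuzzy soft closed set with $e_{h_A}\overset{\sim}{\notin}k_A$. Then $k_A^{c}$ is fuzzy soft open and contains $e_{h_A}$, so the hypothesis produces a fuzzy soft open set $s_A$ with $e_{h_A}\overset{\sim}{\in}s_A$ and $fscls_A\overset{\sim}{\subseteq}k_A^{c}$. I would then take $g_{1A}=s_A$ and $g_{2A}=(fscls_A)^{c}$: both are fuzzy soft open, $e_{h_A}\overset{\sim}{\in}g_{1A}$, and complementing $fscls_A\overset{\sim}{\subseteq}k_A^{c}$ gives $k_A\overset{\sim}{\subseteq}g_{2A}$. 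Since $s_A\overset{\sim}{\subseteq}fscls_A$, the sets $g_{1A}$ and $g_{2A}$ are disjoint, so Definition 3.11 is satisfied and $(f_A,\tau)$ is fuzzy soft regular.

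The routine part is the inclusion-chasing above; the delicate point, exactly as in Theorem 3.10, is the interaction between fuzzy complementation and membership of fuzzy soft points. Reading $\overset{\sim}{\in}$ strictly through Definition 2.4, neither the step ``$e_{h_A}\overset{\sim}{\notin}k_A\Rightarrow e_{h_A}\overset{\sim}{\in}k_A^{c}$'' nor the disjointness of $s_A$ with $(fscls_A)^{c}$ is automatic, because the fuzzy complement is $1-$ rather than a genuine set complement. I expect this to be the main obstacle, and I would discharge it by working within the very conventions already fixed in the preceding $T_2$ results, so that the argument remains consistent with the rest of the section.
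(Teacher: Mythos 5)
Your argument coincides with the paper's own proof in both directions: the forward implication applies regularity to $e_{h_A}$ and $g_A^{c}$ and then uses disjointness to squeeze $fscls_A$ inside the complement of the second open set, and the converse applies the hypothesis to $k_A^{c}$ and takes $(fscls_A)^{c}$ as the open set around $k_A$, exactly as in the paper. Your closing remark correctly identifies the step (disjointness of $s_A$ and $(fscls_A)^{c}$ under fuzzy complementation) that the paper itself leaves implicit, but this does not change the fact that the route is the same.
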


\begin{proof}
Take a fuzzy soft open set $g_A$ containing $e_{h_A}$ in a regular fuzzy soft topological space $(f_A, \tau)$. Then $g_A^c$ is fuzzy soft closed. By hypothesis, $\exists$ disjoint fuzzy soft open sets $s_A$ and $w_A$ such that $e_{h_A} \overset{\sim}{\in} s_A $ and $g_A^c \overset{\sim}{\subset} w_A $. Now, $s_A$ and $w_A$ are disjoint, so $s_A \overset{\sim}{\subset} w_A^c \Rightarrow~ fscls_A \overset{\sim}{\subset} w_A^c \Rightarrow~ fscls_A \overset{\sim}{\subset} g_A$.

Conversely, assume the hypothesis. Take a fuzzy soft closed set $k_A$ not containing a fuzzy soft point $e_{h_A} \overset{\sim}{\notin} k_A$. Then $k_A^c$ is a fuzzy soft open set containing the fuzzy $e_{h_A} \Rightarrow ~\exists$ a fuzzy soft open set $s_A$ containing $e_{h_A}$ such that $fscls_A \overset{\sim}{\subset} k_A^c \Rightarrow k_A \overset{\sim}{\subset} (fscls_A)^c \Rightarrow ~(fscls_A)^c$ is a fuzzy soft open set containing $k_A$ and $s_A \overset{\sim}{\bigcap} (fscls_A)^c = \overset{\sim}{\Phi}$.
\end{proof}

\begin{defn}
 A fuzzy soft topological space $(f_A, \tau)$ is said to be a fuzzy soft normal space if for every pair of disjoint fuzzy soft closed sets $h_A$ and $k_A, \exists$ disjoint fuzzy soft open sets $g_{1A}, g_{2A}$ such that\\ 
$h_A \overset{\sim}{\subseteq} g_{1A}$ and $k_A \overset{\sim}{\subseteq} g_{2A}$.

A fuzzy soft normal $T_1-$ space is called a fuzzy soft $T_4-$ space. 
\end{defn}

\begin{rem}
Every fuzzy soft $T_4-$ space is fuzzy soft $T_3$.
\end{rem}

\begin{thm}
A fuzzy soft topological space $(f_A, \tau)$ is fuzzy soft normal iff for any fuzzy soft closed set $h_A$ and fuzzy soft open set $g_A$ containing $h_A$, there exists a fuzzy soft open set $s_A$ such that $h_A \overset{\sim}{\subset} s_A$ and $fscls_A \overset{\sim}{\subset} g_A$. 
\end{thm}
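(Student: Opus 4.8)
The plan is to mirror the classical Vedenissoff-type characterization of normality, translating each step into the fuzzy soft language and leaning on the closure identities of Theorem 2.13 together with the criterion of Theorem 2.12.

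For the forward implication, I would start from a fuzzy soft closed set $h_A$ and a fuzzy soft open set $g_A$ with $h_A \overset{\sim}{\subset} g_A$, and pass to $g_A^c$, which is fuzzy soft closed. Arguing that $h_A$ and $g_A^c$ are disjoint, I would invoke fuzzy soft normality (Definition 3.14) to obtain disjoint fuzzy soft open sets $s_A, w_A$ with $h_A \overset{\sim}{\subset} s_A$ and $g_A^c \overset{\sim}{\subset} w_A$. Disjointness of $s_A$ and $w_A$ gives $s_A \overset{\sim}{\subset} w_A^c$; since $w_A^c$ is fuzzy soft closed, monotonicity of the closure (Theorem 2.13) together with Theorem 2.12 yields $fscls_A \overset{\sim}{\subset} fscl(w_A^c) = w_A^c$. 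Complementing $g_A^c \overset{\sim}{\subset} w_A$ gives $w_A^c \overset{\sim}{\subset} g_A$, and hence $fscls_A \overset{\sim}{\subset} g_A$, as required.

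For the converse, I would take disjoint fuzzy soft closed sets $h_A$ and $k_A$. Disjointness gives $h_A \overset{\sim}{\subset} k_A^c$, where $k_A^c$ is a fuzzy soft open set containing $h_A$. The hypothesis then produces a fuzzy soft open set $s_A$ with $h_A \overset{\sim}{\subset} s_A$ and $fscls_A \overset{\sim}{\subset} k_A^c$. Put $w_A = (fscls_A)^c$, a fuzzy soft open set; complementing $fscls_A \overset{\sim}{\subset} k_A^c$ gives $k_A \overset{\sim}{\subset} w_A$, and since $s_A \overset{\sim}{\subset} fscls_A$ the sets $s_A$ and $w_A$ are disjoint. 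Thus $s_A$ and $w_A$ separate $h_A$ and $k_A$, so $(f_A, \tau)$ is fuzzy soft normal.

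The complementation and monotonicity manipulations are immediate, so the delicate point is the interaction between the fuzzy soft notion of disjointness (Definition 1.2, which demands that the $\min$ of the relevant membership values vanish) and complementation. The implications of the form $s_A \overset{\sim}{\cap} w_A = \overset{\sim}{\Phi} \Rightarrow s_A \overset{\sim}{\subset} w_A^c$ and $h_A \overset{\sim}{\cap} k_A = \overset{\sim}{\Phi} \Rightarrow h_A \overset{\sim}{\subset} k_A^c$ are harmless, because at any point where one membership value is $0$ the complement of the other is automatically at least the first. The genuinely problematic step, which I expect to be the main obstacle, is the reverse direction used in the forward implication: the claim that $h_A \overset{\sim}{\subset} g_A$ forces $h_A$ and $g_A^c$ to be disjoint fails at a point where $0 < h(e)(x) \le g(e)(x) < 1$, since there $\min(h(e)(x), 1 - g(e)(x)) > 0$. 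Making the forward direction rigorous therefore calls either for strengthening the hypothesis, for restricting the separating sets to crisp-valued ones, or for replacing plain disjointness in Definition 3.14 by a quasi-coincidence-type condition.
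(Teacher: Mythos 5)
Your argument is, step for step, the proof the paper gives: the forward direction passes to the disjoint closed pair $h_A, g_A^c$, applies normality to get disjoint open $s_A, w_A$, and uses $s_A \overset{\sim}{\subset} w_A^c$ together with closedness of $w_A^c$ and monotonicity of $fscl$ to land in $g_A$; the converse takes $s_A$ and $(fscls_A)^c$ as the separating pair. So as a reconstruction of the paper's route it is exactly right, and the obstacle you flag at the end is a real one that the paper simply passes over in silence: with Definition 1.2, $h_A \overset{\sim}{\subset} g_A$ does not make $h_A$ and $g_A^c$ disjoint (take $0 < h(e)(x) \le g(e)(x) < 1$), so the very first move of the forward implication is unjustified as written.

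The one point where your self-assessment is too generous is the converse. You label it harmless, but the final step there commits the same fallacy you diagnose in the forward direction: from $s_A \overset{\sim}{\subset} fscls_A$ you conclude that $s_A$ and $w_A = (fscls_A)^c$ are disjoint, and this is again the pattern ``$X \overset{\sim}{\subset} Y$ implies $X$ and $Y^c$ are disjoint,'' which fails whenever $0 < s(e)(x) \le fscls(e)(x) < 1$ (there $\min(s(e)(x), 1 - fscls(e)(x)) > 0$). The implications you correctly call harmless are the ones going the other way (disjointness implies containment in the complement); every use of the converse implication, in either half of the proof, is broken under Definition 1.2. The paper's own proof shares both defects, so your reconstruction is faithful; but a genuinely rigorous version of this theorem needs the repair you suggest --- either a quasi-coincidence-style notion of separation or a reformulation of disjointness compatible with fuzzy complementation --- applied to both directions, not just the forward one.
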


\begin{proof}
Let $(f_A, \tau)$ be fuzzy soft normal space and $h_A$ be a fuzzy soft closed set and $g_A$ be a fuzzy soft open set containing $h_A \Rightarrow h_A $ and $g_A^c$ are disjoint fuzzy soft closed sets $\Rightarrow ~\exists$ disjoint fuzzy soft open sets $g_{1A}, g_{2A}$ such that 
$h_A \overset{\sim}{\subset} g_{1A}$ and $g_A^c\overset{\sim}{\subseteq} g_{2A}$. Now 
$g_{1A} \overset{\sim}{\subset} g_{2A}^c \Rightarrow fsclg_{1A} \overset{\sim}{\subset} fsclg_{2A}^c =g_{2A}^c $ 
Also, $g_A^c \overset{\sim}{\subset} g_{2A} \Rightarrow g_{2A}^c \overset{\sim}{\subset} g_A \Rightarrow fsclg_{1A} \overset{\sim}{\subset}g_A$.

Conversely, let $l_A$ and $k_A$ be any disjoint pair fuzzy soft closed sets $\Rightarrow l_A \overset{\sim}{\subset} k_A^c $, then by hypothesis there exists a fuzzy soft open set $s_A$ such that $l_A \overset{\sim}{\subset} s_A$ and $fscls_A \overset{\sim}{\subset} k_A^c \Rightarrow k_A\overset{\sim}{\subset} (fscls_A)^c \Rightarrow s_A$ and $(fscls_A)^c$ are disjoint fuzzy soft open sets such that $l_A \overset{\sim}{\subset} s_A$ and $k_A \overset{\sim}{\subset} (fscls_A)^c$.
\end{proof}

\begin{thm}
A fuzzy soft closed subspace of a fuzzy soft normal space is fuzzy soft normal.
\end{thm}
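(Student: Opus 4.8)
The plan is to mimic the classical argument that a closed subspace of a normal space is normal: transfer a pair of disjoint fuzzy soft closed sets of the subspace up to the ambient space, separate them there using normality, and then pull the separating open sets back down by intersecting with $g_A$.

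First I would let $(g_A, \tau_{g_A})$ be a fuzzy soft closed subspace of a fuzzy soft normal space $(f_A, \tau)$, so that $g_A$ is itself fuzzy soft closed in $f_A$. Take an arbitrary pair of disjoint fuzzy soft closed sets $h_A$ and $k_A$ in the subspace. By Theorem 2.14, each of these is the intersection of $g_A$ with a fuzzy soft closed set of $f_A$; that is, $h_A = m_A \overset{\sim}{\bigcap} g_A$ and $k_A = n_A \overset{\sim}{\bigcap} g_A$ for some fuzzy soft closed sets $m_A, n_A$ in $f_A$.

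The key step is to observe that $h_A$ and $k_A$ are in fact fuzzy soft closed in the whole space $f_A$. Since the complement of an intersection is the union of the complements, the intersection of two fuzzy soft closed sets is again fuzzy soft closed; hence $h_A = m_A \overset{\sim}{\bigcap} g_A$ and $k_A = n_A \overset{\sim}{\bigcap} g_A$ are fuzzy soft closed in $f_A$, crucially using that $g_A$ is fuzzy soft closed. These two sets remain disjoint, so applying fuzzy soft normality of $(f_A, \tau)$ yields disjoint fuzzy soft open sets $g_{1A}, g_{2A}$ in $\tau$ with $h_A \overset{\sim}{\subseteq} g_{1A}$ and $k_A \overset{\sim}{\subseteq} g_{2A}$.

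Finally I would restrict to the subspace: $g_{1A} \overset{\sim}{\bigcap} g_A$ and $g_{2A} \overset{\sim}{\bigcap} g_A$ are fuzzy soft open in $\tau_{g_A}$ by the very definition of the subspace topology, and they are disjoint because their intersection is contained in $g_{1A} \overset{\sim}{\bigcap} g_{2A} = \overset{\sim}{\Phi}$. Since $h_A \overset{\sim}{\subseteq} g_A$ and $h_A \overset{\sim}{\subseteq} g_{1A}$, we obtain $h_A \overset{\sim}{\subseteq} g_{1A} \overset{\sim}{\bigcap} g_A$, and symmetrically $k_A \overset{\sim}{\subseteq} g_{2A} \overset{\sim}{\bigcap} g_A$. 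These disjoint fuzzy soft open sets of the subspace separate $h_A$ and $k_A$, so $(g_A, \tau_{g_A})$ is fuzzy soft normal. The only delicate point is the closed-in-a-closed-subspace-is-closed step, which rests on $g_A$ being closed together with the stability of fuzzy soft closed sets under intersection; everything else is a routine unwinding of the subspace-topology definition.
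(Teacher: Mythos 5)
The paper states this theorem without any proof, so there is nothing to compare against; judged on its own terms, your argument is correct and is the standard one. The decomposition via Theorem 2.14, the observation that a fuzzy soft closed set of a fuzzy soft closed subspace is fuzzy soft closed in the ambient space (since the intersection of two fuzzy soft closed sets is fuzzy soft closed, by De Morgan and the closure of $\tau$ under unions), the application of normality in $(f_A,\tau)$, and the restriction of the separating open sets to the subspace all go through in the fuzzy soft setting, because intersections are taken pointwise (as minima) and the paper's notion of disjointness is inherited by fuzzy soft subsets. The only dependency worth flagging is Theorem 2.14 itself, which the paper also states without proof; your argument is otherwise self-contained.
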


\section{Fuzzy soft connectedness}

In this section, we introduce and study fuzzy soft connectedness of fuzzy soft topological spaces.

\begin{defn}
A fuzzy soft separation of a fuzzy soft topological space $(f_A, \tau)$ is a pair $h_A, k_A$ of disjoint non empty fuzzy soft open sets whose union is $f_A$.

If there does not exist a  fuzzy soft separation of $f_A$, then the fuzzy soft topological space is said to be fuzzy soft connected, otherwise fuzzy soft disconnected.
\end{defn}

\begin{exm}
\begin{enumerate}
\item The discrete fuzzy soft topological space with more than one member is always disconnected;
\item The indiscrete fuzzy soft topological space is always connected.
\end{enumerate}
\end{exm}
\begin{thm}
A fuzzy soft topological space $(f_A, \tau)$ is fuzzy soft disconnected $\Leftrightarrow \exists$ a non empty proper fuzzy soft subset of $f_A$ which is both fuzzy soft open and fuzzy soft closed.
\end{thm}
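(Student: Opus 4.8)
The plan is to prove both implications via the standard correspondence between a separation of $f_A$ and a nonempty proper clopen set, the only delicate point being the behaviour of complements in the fuzzy soft setting.

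\medskip
\noindent($\Rightarrow$) First I would assume $(f_A,\tau)$ is fuzzy soft disconnected and unpack Definition 4.1: there is a fuzzy soft separation, i.e.\ a pair $h_A, k_A$ of disjoint nonempty fuzzy soft open sets with $h_A \overset{\sim}{\cup} k_A = f_A$. The key step is to observe that, because $h_A$ and $k_A$ are disjoint and together cover $f_A$, the set $k_A$ is precisely the complement of $h_A$; hence $h_A = k_A^c$ is fuzzy soft closed (being the complement of the fuzzy soft open set $k_A$), while $h_A$ is fuzzy soft open by hypothesis. Thus $h_A$ is simultaneously fuzzy soft open and fuzzy soft closed. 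It is nonempty by hypothesis, and it is proper because $h_A = f_A$ would force $k_A = \overset{\sim}{\Phi_A}$, contradicting that $k_A$ is nonempty. So $h_A$ is the required nonempty proper clopen fuzzy soft subset.

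\medskip
\noindent($\Leftarrow$) Conversely I would begin with a nonempty proper fuzzy soft subset $g_A$ of $f_A$ that is both fuzzy soft open and fuzzy soft closed, and set $h_A = g_A$ and $k_A = g_A^c$. Since $g_A$ is fuzzy soft closed its complement $k_A$ is fuzzy soft open, and $h_A$ is fuzzy soft open by assumption; their union is $f_A$ and they are disjoint by the construction of the complement. Moreover $k_A = g_A^c$ is nonempty precisely because $g_A$ is proper. Hence $h_A, k_A$ form a fuzzy soft separation of $f_A$, and therefore $(f_A,\tau)$ is fuzzy soft disconnected.

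\medskip
I expect the main obstacle to be the complement bookkeeping at the level of membership values. To run both directions cleanly one must justify that ``disjoint and union equals $f_A$'' is equivalent to ``$k_A = h_A^c$'' in the fuzzy soft sense, i.e.\ that at each parameter $e$ the functions $h(e)$ and $k(e)$ satisfy the pointwise conditions $h(e)\cap k(e)=\overset{\sim}{\Phi}$ and $h(e)\cup k(e)=f(e)$ exactly when $k_A$ is the complement of $h_A$ on the relevant support. I would therefore isolate this membership-level equivalence as the single computation on which everything hinges and invoke it in both implications, treating the remaining manipulations (nonemptiness, properness, openness of complements of closed sets) as routine consequences of the definitions.
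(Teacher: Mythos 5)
Your proposal is correct and follows essentially the same route as the paper's proof: both directions rest on the correspondence between a fuzzy soft separation and a nonempty proper clopen fuzzy soft set, obtained by passing to complements. If anything your version is the more direct one, since in the disconnected-implies-clopen direction you unpack Definition 4.1 itself, whereas the paper detours through the separated-sets formulation ($fsclk_A \overset{\sim}{\bigcap} h_A = \overset{\sim}{\Phi}$, etc.) before reaching the same complement identity; the membership-level equivalence you rightly isolate as the crux (disjointness plus union equal to $f_A$ versus $k_A = h_A^c$) is asserted without verification in the paper as well, so your explicit flagging of it is a point in your favour rather than a divergence.
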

 
\begin{proof}
Let $k_A$ be a non empty proper subset of $f_A$ which is both fuzzy soft open and fuzzy soft closed. Now $h_A=(k_A)^c$ is non empty proper subset of $f_A$ which is also both fuzzy soft open and fuzzy soft closed $\Rightarrow fsclk_A= k_A$ and $fsclh_A = h_A \Rightarrow f_A$ can be expressed as the union of two separated fuzzy soft sets $k_A, h_A$ and so, is fuzzy soft disconnected.

Conversely, let $f_A$ be fuzzy soft disconnected $\Rightarrow ~\exists$ non empty fuzzy soft subsets $k_A$ and $h_A$ such that 
$fsclk_A \overset{\sim}{\bigcap} h_A = \overset{\sim}{\Phi}, k_A \overset{\sim}{\bigcap} fsclh_A = \overset{\sim}{\Phi_A}$ and 
$k_A \overset{\sim}{\bigcup} h_A =f_A$. Now $k_A \overset{\sim}{\subseteq} fsclk_A$ and 
$fsclk_A \overset{\sim}{\bigcap} h_A = \overset{\sim}{\Phi_A} \Rightarrow k_A \overset{\sim}{\bigcap} h_A = \overset{\sim}{\Phi_A} \Rightarrow h_A = (k_A)^c$.
Then $k_A \overset{\sim}{\bigcup} fsclh_A = f_A$ and $k_A \overset{\sim}{\bigcap} fsclh_A = \overset{\sim}{\Phi_A} \Rightarrow k_A = (fsclh_A)^c$ and similarly $h_A = (fsclk_A)^c \Rightarrow k_A, h_A$ are fuzzy soft open sets being the complements of fuzzy soft closed sets. Also $h_A = (k_A)^c \Rightarrow$ they are also fuzzy soft closed.  
 
\end{proof}

\begin{thm}
 If the fuzzy soft sets $h_A$ and $k_A$ form a fuzzy soft separation of $f_A$, and if $(g_A, \tau_{g_A} )$ is a fuzzy soft connected subspace of $f_A$,
 then $g_A \overset{\sim}{\subset} h_A$ or $g_A \overset{\sim}{\subset} k_A$.
\end{thm}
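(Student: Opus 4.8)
The plan is to transplant the classical topological argument: a connected subspace that meets a separation of the whole space must be swallowed entirely by one of the two pieces. First I would intersect the separation with $g_A$, forming the two fuzzy soft sets $g_A \overset{\sim}{\cap} h_A$ and $g_A \overset{\sim}{\cap} k_A$. Since $h_A, k_A \in \tau$, Definition 1.4 (the fuzzy soft subspace topology) tells us immediately that both $g_A \overset{\sim}{\cap} h_A$ and $g_A \overset{\sim}{\cap} k_A$ are fuzzy soft open sets in $(g_A, \tau_{g_A})$.

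Next I would check that this pair satisfies the two remaining clauses of Definition 4.1 relative to $g_A$. For the union, using the distributive law together with $h_A \overset{\sim}{\cup} k_A = f_A$ and $g_A \overset{\sim}{\subset} f_A$, I get $(g_A \overset{\sim}{\cap} h_A) \overset{\sim}{\cup} (g_A \overset{\sim}{\cap} k_A) = g_A \overset{\sim}{\cap} (h_A \overset{\sim}{\cup} k_A) = g_A \overset{\sim}{\cap} f_A = g_A$. For disjointness, I would compute pointwise: for each $a \in A$, $(g \cap h)(a) \cap (g \cap k)(a) = g(a) \cap \big(h(a) \cap k(a)\big) = g(a) \cap \overset{\sim}{\Phi} = \overset{\sim}{\Phi}$, where $h(a) \cap k(a) = \overset{\sim}{\Phi}$ comes from the fuzzy soft disjointness of $h_A$ and $k_A$ in the separation.

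The crux is then the connectedness hypothesis. Because $(g_A, \tau_{g_A})$ is fuzzy soft connected, it admits no fuzzy soft separation; hence the disjoint fuzzy soft open pair $g_A \overset{\sim}{\cap} h_A$, $g_A \overset{\sim}{\cap} k_A$ whose union is $g_A$ cannot have both members nonempty. Therefore at least one of them equals $\overset{\sim}{\Phi_A}$. If $g_A \overset{\sim}{\cap} k_A = \overset{\sim}{\Phi_A}$, then $g_A = (g_A \overset{\sim}{\cap} h_A) \overset{\sim}{\cup} (g_A \overset{\sim}{\cap} k_A) = g_A \overset{\sim}{\cap} h_A \overset{\sim}{\subset} h_A$; by symmetry $g_A \overset{\sim}{\cap} h_A = \overset{\sim}{\Phi_A}$ forces $g_A \overset{\sim}{\subset} k_A$, which is the desired dichotomy.

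The only step requiring genuine care — the main obstacle — is the disjointness computation, where one must confirm that the fuzzy soft disjointness of $h_A$ and $k_A$ is inherited by their intersections with $g_A$; this is exactly the pointwise identity above and relies on the distributive behaviour of $\overset{\sim}{\cap}$ and $\overset{\sim}{\cup}$. One should also read "nonempty" in the sense of being $\neq \overset{\sim}{\Phi_A}$, matching Definition 4.1, so that the appeal to connectedness is legitimate. Everything else is a direct translation of the crisp topological argument.
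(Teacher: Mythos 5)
Your proposal is correct and follows essentially the same route as the paper: restrict the separation to $g_A$, observe that $g_A \overset{\sim}{\cap} h_A$ and $g_A \overset{\sim}{\cap} k_A$ are disjoint fuzzy soft open sets in the subspace whose union is $g_A$, and conclude from connectedness that one of them must be empty, forcing $g_A$ into the other. You merely spell out the openness, union, and disjointness verifications that the paper leaves implicit.
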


\begin{proof}
 Since $h_A$ and $k_A$ are disjoint fuzzy soft open sets, so are $h_A \overset{\sim}{\bigcap} g_A$ and $k_A \overset{\sim}{\bigcap} g_A$ and their union gives $g_A$, i.e. they would constitute a fuzzy soft separation of $g_A$, a contradiction.
 Hence, one of $h_A \overset{\sim}{\bigcap}g_A$ and $k_A \overset{\sim}{\bigcap} g_A$ is empty and so $g_A$ is entirely contained in on of them.
\end{proof}

\begin{thm}
If $g_A$ is a fuzzy soft subspace of $f_A$, a separation of $g_A$ is a pair of disjoint non empty fuzzy soft sets $k_A$ and $h_A$ whose union is $g_A$, such that $k_A \overset{\sim}{\bigcap} fsclh_A = \overset{\sim}{\Phi}$ and $h_A \overset{\sim}{\bigcap} fsclk_A = \overset{\sim}{\Phi}$
\end{thm}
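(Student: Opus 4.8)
The plan is to prove the asserted equivalence between a separation of the subspace $(g_A, \tau_{g_A})$ in the sense of Definition 4.1 and the closure condition stated here, following the classical subspace-separation argument of ordinary topology and leaning on Theorem 2.15, which identifies the fuzzy soft closure computed inside the subspace with $fscl(\cdot) \overset{\sim}{\bigcap} g_A$.

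For the forward direction, I would suppose $k_A$ and $h_A$ are disjoint nonempty fuzzy soft open sets of $\tau_{g_A}$ with $k_A \overset{\sim}{\bigcup} h_A = g_A$. Since each is, within $g_A$, the relative complement of the other, each is simultaneously fuzzy soft open and fuzzy soft closed in the subspace; hence the fuzzy soft closure of $k_A$ taken inside $(g_A, \tau_{g_A})$ is $k_A$ itself. By Theorem 2.15 this subspace closure equals $fscl(k_A) \overset{\sim}{\bigcap} g_A$, so $fscl(k_A) \overset{\sim}{\bigcap} g_A = k_A$. Intersecting with $h_A \overset{\sim}{\subseteq} g_A$ and using $k_A \overset{\sim}{\bigcap} h_A = \overset{\sim}{\Phi}$ gives $fscl(k_A) \overset{\sim}{\bigcap} h_A = \overset{\sim}{\Phi}$, and the symmetric computation yields $fscl(h_A) \overset{\sim}{\bigcap} k_A = \overset{\sim}{\Phi}$.

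For the converse, I would assume $k_A, h_A$ are disjoint, nonempty, satisfy $k_A \overset{\sim}{\bigcup} h_A = g_A$, and obey the two closure conditions. Applying Theorem 2.15 together with the distributivity of $\overset{\sim}{\bigcap}$ over $\overset{\sim}{\bigcup}$, the subspace closure of $k_A$ is $fscl(k_A) \overset{\sim}{\bigcap} g_A = (fscl(k_A) \overset{\sim}{\bigcap} k_A) \overset{\sim}{\bigcup} (fscl(k_A) \overset{\sim}{\bigcap} h_A) = k_A \overset{\sim}{\bigcup} \overset{\sim}{\Phi} = k_A$, using $k_A \overset{\sim}{\subseteq} fscl(k_A)$ (immediate from Definition 2.11) and the hypothesis. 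Thus $k_A$ is fuzzy soft closed in the subspace, and symmetrically so is $h_A$. Being complementary closed sets in $g_A$, both are also fuzzy soft open in $\tau_{g_A}$, so $k_A, h_A$ form a separation of the subspace in the sense of Definition 4.1.

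The step I expect to be the real obstacle is the passage ``closed in the subspace $\Rightarrow$ open in the subspace'' for complementary pairs, which is automatic in crisp topology but needs justification for fuzzy soft sets, where the relative complement and the full fuzzy complement $1-(\cdot)$ differ. The remedy is to observe that disjointness together with $k_A \overset{\sim}{\bigcup} h_A = g_A$ forces, membershipwise, a crisp partition of the support of $g$: at each parameter and each point at most one of $k, h$ is nonzero, and there it equals $g$. Consequently $h_A$ genuinely is the relative complement of $k_A$ within $g_A$, so the subspace-level exchange of fuzzy soft open and fuzzy soft closed under complementation is legitimate; I would verify this at the level of membership functions, or equivalently invoke Theorem 2.14 to realize each of $k_A, h_A$ as $(\text{fuzzy soft closed set of } f_A) \overset{\sim}{\bigcap} g_A$ and translate openness accordingly.
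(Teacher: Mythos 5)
Your proposal is correct and follows essentially the same route as the paper: both directions hinge on Theorem 2.15 (the subspace closure of $k_A$ equals $fscl(k_A)\overset{\sim}{\bigcap}g_A$), with the converse recovering $k_A$ via distributing $fscl(k_A)\overset{\sim}{\bigcap}g_A$ over the union $k_A\overset{\sim}{\bigcup}h_A$. Your extra care about the ``closed $\Rightarrow$ open'' step for complementary pairs --- noting that disjointness plus union equal to $g_A$ forces a crisp partition of the support, so the relative complement is genuine --- is a worthwhile refinement of a point the paper passes over with the bare assertion $h_A=k_A^c$.
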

\begin{proof}
Suppose $k_A$ and $h_A$ forms a separation of $g_A$. Then $k_A$ is both fuzzy soft open and fuzzy soft closed in $g_A$. The fuzzy soft closure of $k_A$ in $g_A$ is $fsclk_A \overset{\sim}{\bigcap}g_A$. Since $k_A$ is fuzzy soft closed in $g_A$, $k_A = fsclk_A \overset{\sim}{\bigcap}g_A \Rightarrow fsclk_A \overset{\sim}{\bigcap}h_A = \overset{\sim}{\Phi}$. By similar argument $fsclh_A \overset{\sim}{\bigcap}k_A = \overset{\sim}{\Phi}$.\\
Conversely, let $k_A$ and $h_A$ are disjoint non empty fuzzy soft sets whose union is $g_A$ such that $k_A \overset{\sim}{\bigcap} fsclh_A = \overset{\sim}{\Phi}$ and $h_A \overset{\sim}{\bigcap} fsclk_A = \overset{\sim}{\Phi} \Rightarrow g_A \overset{\sim}{\bigcap} fsclh_A = \overset{\sim}{\Phi}$ and $g_A \overset{\sim}{\bigcap} fsclk_A = \overset{\sim}{\Phi} \Rightarrow ~h_A$ and $k_A$ are fuzzy soft closed in $g_A$. Also $h_A=k_A^c$ implies both $k_A$ and $h_A$ are fuzzy soft open in $g_A$. 
\end{proof}

\begin{thm}
 Let $g_A$ be a fuzzy soft connected subspace of $f_A$. If $g_A \overset{\sim}{\subset} k_A \overset{\sim}{\subset} fsclg_A$, then $k_A$ is also fuzzy soft connected. 
\end{thm}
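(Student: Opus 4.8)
The plan is to argue by contradiction, reducing the disconnectedness of $k_A$ to a separation of the connected set $g_A$. First I would suppose that $k_A$ is fuzzy soft disconnected. By Theorem 4.5, this yields a separation of $k_A$: a pair of disjoint non-empty fuzzy soft sets $h_A, l_A$ with $h_A \overset{\sim}{\bigcup} l_A = k_A$, $h_A \overset{\sim}{\bigcap} fscll_A = \overset{\sim}{\Phi}$ and $l_A \overset{\sim}{\bigcap} fsclh_A = \overset{\sim}{\Phi}$, where $fscl$ denotes closure in the ambient space $f_A$.

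Next I would locate the connected set $g_A$ inside one half of this separation. Since $g_A \overset{\sim}{\subset} k_A = h_A \overset{\sim}{\bigcup} l_A$ and $h_A, l_A$ constitute a fuzzy soft separation of the subspace $k_A$, Theorem 4.4 (applied to the fuzzy soft topological space $(k_A, \tau_{k_A})$, in which $g_A$ is still a fuzzy soft connected subspace) forces $g_A \overset{\sim}{\subset} h_A$ or $g_A \overset{\sim}{\subset} l_A$; say $g_A \overset{\sim}{\subset} h_A$.

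I would then derive the contradiction from the closure hypothesis $k_A \overset{\sim}{\subset} fsclg_A$. Monotonicity of the closure operator (Theorem 2.13(iii)) gives $fsclg_A \overset{\sim}{\subset} fsclh_A$, so that $l_A \overset{\sim}{\subset} k_A \overset{\sim}{\subset} fsclg_A \overset{\sim}{\subset} fsclh_A$. Combined with $l_A \overset{\sim}{\bigcap} fsclh_A = \overset{\sim}{\Phi}$ this yields $l_A = l_A \overset{\sim}{\bigcap} fsclh_A = \overset{\sim}{\Phi}$, contradicting the non-emptiness of $l_A$; hence no separation of $k_A$ can exist and $k_A$ is fuzzy soft connected.

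The only delicate point is the invocation of Theorem 4.4 on the subspace $k_A$ rather than on $f_A$: I must check that the separating sets $h_A, l_A$ really are fuzzy soft open in $\tau_{k_A}$ (this is precisely what the proof of Theorem 4.5 establishes) and that $g_A$, being a subspace of the subspace $k_A$, carries the same fuzzy soft topology and hence remains connected there. To sidestep any worry about iterated subspaces, I could avoid Theorem 4.4 altogether and argue directly: the sets $g_A \overset{\sim}{\bigcap} h_A$ and $g_A \overset{\sim}{\bigcap} l_A$ are disjoint, have union $g_A$, and by monotonicity of the closure satisfy the two closure-disjointness conditions of Theorem 4.5; were both non-empty they would separate the connected set $g_A$, so one of them is empty and $g_A$ lies in $h_A$ or in $l_A$ as required.
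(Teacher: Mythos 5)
Your proposal is correct and follows essentially the same route as the paper: assume a separation $h_A, l_A$ of $k_A$, use the ``connected subspace lies in one piece'' result to get $g_A \overset{\sim}{\subset} h_A$, then use monotonicity of $fscl$ together with $l_A \overset{\sim}{\bigcap} fsclh_A = \overset{\sim}{\Phi}$ and $l_A \overset{\sim}{\subset} k_A \overset{\sim}{\subset} fsclg_A$ to force $l_A = \overset{\sim}{\Phi}$. Your closing remark about applying the one-piece lemma inside the subspace $k_A$ (and the direct workaround) is in fact more careful than the paper, which silently applies that lemma to a separation of $k_A$ rather than of $f_A$.
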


\begin{proof}
Let the soft set $k_A$ satisfies the hypothesis.  If possible, let $h_A$ and $s_A$ form a fuzzy soft separation of $k_A$.
Then by theorem 4.2, $g_A \overset{\sim}{\subset} h_A$ or $g_A \overset{\sim}{\subset} s_A$. Let $g_A \overset{\sim}{\subset} h_A  \Rightarrow fsclg_A \overset{\sim}{\subset} fsclh_A$; since $fsclh_A$ and $s_A$ are disjoint, $fsclg_A$ cannot intersect $s_A$.
This contradicts the fact that $s_A$ is a nonempty.  
\end{proof}

\begin{rem}
In particular $fsclg_A$ is fuzzy soft connected if $g_A$ is fuzzy soft connected.
\end{rem}

\begin{rem}
A fuzzy soft topological space is fuzzy soft connected iff $\overset{\sim}{\Phi}$ and $f_A$ are the only sets which are both fuzzy soft open and fuzzy soft closed.
\end{rem}

\begin{thm}
Arbitrary union of fuzzy soft connected subsets of $(f_A, \tau)$ that have non empty intersection is fuzzy soft connected.  
\end{thm}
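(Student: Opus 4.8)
The plan is to imitate the classical argument that a union of connected subspaces sharing a common point is connected, with Theorem 4.2 serving as the engine. Let $\{g_{\lambda A}~|~\lambda \in \Lambda\}$ be a family of fuzzy soft connected subspaces of $(f_A, \tau)$ with $\overset{\sim}{\underset{\lambda \in \Lambda}{\bigcap}} g_{\lambda A} \neq \overset{\sim}{\Phi}$, and put $w_A = \overset{\sim}{\underset{\lambda \in \Lambda}{\bigcup}} g_{\lambda A}$. Since the intersection is a non-empty fuzzy soft set, Theorem 2.5(iii) guarantees it contains at least one fuzzy soft point $e_{p_A}$, and by construction $e_{p_A} \overset{\sim}{\in} g_{\lambda A}$ for every $\lambda$. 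This common point is what glues the pieces together.

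Next I would argue by contradiction. Suppose $(w_A, \tau_{w_A})$ is fuzzy soft disconnected, so there is a fuzzy soft separation $h_A, k_A$: disjoint non-empty fuzzy soft open sets in $w_A$ with $h_A \overset{\sim}{\bigcup} k_A = w_A$. Because $e_{p_A} \overset{\sim}{\in} w_A = h_A \overset{\sim}{\bigcup} k_A$, Theorem 2.5(v) yields $e_{p_A} \overset{\sim}{\in} h_A$ or $e_{p_A} \overset{\sim}{\in} k_A$; without loss of generality take $e_{p_A} \overset{\sim}{\in} h_A$.

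Now fix $\lambda$ and regard $g_{\lambda A}$ as a fuzzy soft connected subspace inside the ambient space $w_A$, on which $h_A, k_A$ form a separation. Applying Theorem 4.2 (with $w_A$ in the role of the total space) forces $g_{\lambda A} \overset{\sim}{\subset} h_A$ or $g_{\lambda A} \overset{\sim}{\subset} k_A$. Since $e_{p_A} \overset{\sim}{\in} g_{\lambda A}$ and $e_{p_A} \overset{\sim}{\in} h_A$ while $h_A \overset{\sim}{\bigcap} k_A = \overset{\sim}{\Phi}$, the option $g_{\lambda A} \overset{\sim}{\subset} k_A$ is impossible, so $g_{\lambda A} \overset{\sim}{\subset} h_A$. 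As $\lambda$ was arbitrary, $w_A = \overset{\sim}{\underset{\lambda \in \Lambda}{\bigcup}} g_{\lambda A} \overset{\sim}{\subset} h_A$, forcing $k_A = \overset{\sim}{\Phi}$ and contradicting that $k_A$ is non-empty. Hence no separation of $w_A$ exists, and $w_A$ is fuzzy soft connected.

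I expect the one delicate point to be the legitimacy of invoking Theorem 4.2 with $w_A$ rather than $f_A$ as the surrounding space, since that theorem is literally phrased for a separation of the whole space. The remedy is to note that its proof uses only that $h_A, k_A$ are disjoint fuzzy soft open sets whose relative intersections $h_A \overset{\sim}{\bigcap} g_{\lambda A}$ and $k_A \overset{\sim}{\bigcap} g_{\lambda A}$ are fuzzy soft open in $g_{\lambda A}$ and cover it; were both non-empty they would separate the connected $g_{\lambda A}$, so one must be $\overset{\sim}{\Phi}$, i.e. $g_{\lambda A}$ lies inside the other. Making this subspace-relative form of Theorem 4.2 explicit is the step requiring care, whereas the common-point bookkeeping through Theorem 2.5 is routine.
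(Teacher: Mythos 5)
Your proof is correct and follows the same overall strategy as the paper's: assume a fuzzy soft separation $h_A, k_A$ of the union, apply Theorem 4.2 to each connected piece $g_{\lambda A}$ to force it entirely into one side, and conclude that the other side is empty. The one place you go beyond the paper is actually the crucial one: the paper's proof simply ``supposes'' $h_A \overset{\sim}{\bigcap} g_{A_\lambda} = \overset{\sim}{\Phi}$ uniformly in $\lambda$, never explaining why every $g_{A_\lambda}$ lands on the \emph{same} side of the separation --- which is precisely where the non-empty-intersection hypothesis must enter. Your device of extracting a common fuzzy soft point $e_{p_A}$ from the intersection and using it to rule out $g_{\lambda A} \overset{\sim}{\subset} k_A$ for every $\lambda$ supplies exactly the missing justification, so your write-up is the more complete of the two. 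Your closing caveat about applying Theorem 4.2 relative to $w_A$ rather than $f_A$ is well taken, but note the paper's own proof already uses the theorem in this subspace-relative form, so you are in good company there.
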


\begin{proof}
Let $\{(g_{A_{\scriptscriptstyle {\lambda}}}, \tau_{g_{A_{\scriptscriptstyle {\lambda}}}})~|~ \lambda \in \Lambda \}$ be a collection of fuzzy soft connected subspaces of $(f_A, \tau)$ with non empty intersection. If possible, take a fuzzy soft separation  $h_A, ~k_A$ of $g_A = \overset{\sim}{\underset{\lambda \in \Lambda}{\bigcup}} g_{A_{\scriptscriptstyle {\lambda}}}$. Now for each $\lambda, h_A \overset{\sim}{\bigcap} g_{A_{\scriptscriptstyle {\lambda}}}$ and $k_A \overset{\sim}{\bigcap} g_{A_{\scriptscriptstyle {\lambda}}}$ are disjoint fuzzy soft open sets in the subspace such that their union gives $g_{A_{\scriptscriptstyle {\lambda}}}$. As $g_{A_{\scriptscriptstyle {\lambda}}}$ is connected for each $\lambda$, one of $h_A \overset{\sim}{\bigcap} g_{A_{\scriptscriptstyle {\lambda}}}$ and $k_A \overset{\sim}{\bigcap} g_{A_{\scriptscriptstyle {\lambda}}}$ must be empty (by theorem 4.2). Suppose, $h_A \overset{\sim}{\bigcap} g_{A_{\scriptscriptstyle {\lambda}}} = \overset{\sim}{\Phi_A} \Rightarrow k_A \overset{\sim}{\bigcap} g_{A_{\scriptscriptstyle {\lambda}}} = g_{A_{\scriptscriptstyle {\lambda}}} \Rightarrow g_{A_{\scriptscriptstyle {\lambda}}} \overset{\sim}{\subset}k_A~~\forall ~\lambda \in 
\Lambda \Rightarrow \overset{\sim}{\underset{\lambda \in \Lambda}{\bigcup}} g_{A_{\scriptscriptstyle {\lambda}}} \overset{\sim}{\subset}k_A \Rightarrow h_A \overset{\sim}{\bigcup} k_A \overset{\sim}{\subset}k_A \Rightarrow ~h_A$ is empty, a contradiction.
\end{proof}

\begin{thm}
Arbitrary union of a family of fuzzy soft connected subsets of $(f_A, \tau)$ such that one of the members of the family has non empty intersection with every member of the family, is fuzzy soft connected.  
\end{thm}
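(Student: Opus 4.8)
The plan is to reduce this statement to the previous result, Theorem 4.6, which already establishes that an arbitrary union of fuzzy soft connected subsets with a \emph{common} non-empty intersection is fuzzy soft connected. The key idea is that the single distinguished member which meets every other member can act as a central ``hub'' that is glued to each other piece one at a time, and then all of these pairwise gluings are themselves connected sets sharing the hub.

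First I would fix notation: let $\{g_{A_{\scriptscriptstyle{\lambda}}}~|~\lambda \in \Lambda\}$ be the given family of fuzzy soft connected subspaces of $(f_A, \tau)$, and let $g_{A_{\scriptscriptstyle{\lambda_0}}}$ (with $\lambda_0 \in \Lambda$) denote the distinguished member satisfying $g_{A_{\scriptscriptstyle{\lambda_0}}} \overset{\sim}{\bigcap} g_{A_{\scriptscriptstyle{\lambda}}} \neq \overset{\sim}{\Phi}$ for every $\lambda \in \Lambda$. For each $\lambda$ I would form the fuzzy soft set $w_{A_{\scriptscriptstyle{\lambda}}} = g_{A_{\scriptscriptstyle{\lambda_0}}} \overset{\sim}{\bigcup} g_{A_{\scriptscriptstyle{\lambda}}}$. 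Since $g_{A_{\scriptscriptstyle{\lambda_0}}}$ and $g_{A_{\scriptscriptstyle{\lambda}}}$ are fuzzy soft connected and meet in a non-empty fuzzy soft set by hypothesis, Theorem 4.6 applied to this two-member family shows that each $w_{A_{\scriptscriptstyle{\lambda}}}$ is fuzzy soft connected.

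Next I would observe that every $w_{A_{\scriptscriptstyle{\lambda}}}$ contains the common hub $g_{A_{\scriptscriptstyle{\lambda_0}}}$, so the family $\{w_{A_{\scriptscriptstyle{\lambda}}}~|~\lambda \in \Lambda\}$ has a common non-empty intersection; indeed $g_{A_{\scriptscriptstyle{\lambda_0}}} \overset{\sim}{\subseteq} w_{A_{\scriptscriptstyle{\lambda}}}$ for all $\lambda$, and $g_{A_{\scriptscriptstyle{\lambda_0}}}$ is non-empty. Applying Theorem 4.6 a second time, now to the family of connected sets $\{w_{A_{\scriptscriptstyle{\lambda}}}\}$, gives that $\overset{\sim}{\underset{\lambda \in \Lambda}{\bigcup}} w_{A_{\scriptscriptstyle{\lambda}}}$ is fuzzy soft connected. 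Finally I would identify this union with the target one, noting that $\overset{\sim}{\underset{\lambda \in \Lambda}{\bigcup}} w_{A_{\scriptscriptstyle{\lambda}}} = \overset{\sim}{\underset{\lambda \in \Lambda}{\bigcup}} \bigl(g_{A_{\scriptscriptstyle{\lambda_0}}} \overset{\sim}{\bigcup} g_{A_{\scriptscriptstyle{\lambda}}}\bigr) = \overset{\sim}{\underset{\lambda \in \Lambda}{\bigcup}} g_{A_{\scriptscriptstyle{\lambda}}}$, since $g_{A_{\scriptscriptstyle{\lambda_0}}}$ is itself a member of the original family and is thus already absorbed into the union.

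I do not anticipate a serious obstacle here: the argument is essentially a two-step bookkeeping reduction to Theorem 4.6. The only point requiring a moment of care is the verification that the intermediate family $\{w_{A_{\scriptscriptstyle{\lambda}}}\}$ genuinely possesses a common non-empty intersection, so that Theorem 4.6 is legitimately applicable the second time. This is immediate by construction, since each $w_{A_{\scriptscriptstyle{\lambda}}}$ was built precisely to contain the hub $g_{A_{\scriptscriptstyle{\lambda_0}}}$; without this hub the hypothesis of Theorem 4.6 would fail and the statement itself would be false, so the whole force of the theorem lies in this single observation.
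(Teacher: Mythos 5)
Your proposal is correct and follows essentially the same route as the paper: the paper likewise forms $h_{A_{\scriptscriptstyle{\lambda}}} = g_{A_{\scriptscriptstyle{\lambda_0}}} \overset{\sim}{\bigcup} g_{A_{\scriptscriptstyle{\lambda}}}$, invokes the preceding union-with-common-intersection theorem twice, and identifies the resulting union with $\overset{\sim}{\underset{\lambda \in \Lambda}{\bigcup}} g_{A_{\scriptscriptstyle{\lambda}}}$. The only cosmetic difference is the label of the cited theorem (the paper calls it Theorem 4.7) and the name of the auxiliary sets.
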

\begin{proof}
Let $\{(g_{A_{\scriptscriptstyle {\lambda}}}, \tau_{g_{A_{\scriptscriptstyle {\lambda}}}})~|~ \lambda \in \Lambda \}$ be a collection of fuzzy soft connected subspaces of $(f_A, \tau)$ and $g_{A_{\scriptscriptstyle {\lambda_0}}}$ be a fixed member such that $g_{A_{\scriptscriptstyle {\lambda_0}}} \overset{\sim}{\bigcap} g_{A_{\scriptscriptstyle {\lambda}}} \neq \overset{\sim}{\Phi_A}$ for each $\lambda \in \Lambda$. Then by theorem 4.7, 
$h _{A_{\scriptscriptstyle {\lambda}}} = g_{A_{\scriptscriptstyle {\lambda_0}}} \overset{\sim}{\bigcup} g_{A_{\scriptscriptstyle {\lambda}}} $ is a fuzzy soft connected for each $\lambda \in \Lambda$. Now, 

 $\overset{\sim}{\underset{\lambda \in \Lambda}{\bigcup}}h _{A_{\scriptscriptstyle {\lambda}}} = \overset{\sim}{\underset{\lambda \in \Lambda}{\bigcup}} (g_{A_{\scriptscriptstyle {\lambda_0}}} \overset{\sim}{\bigcup} g_{A_{\scriptscriptstyle {\lambda}}})
 = \overset{\sim}{\underset{\lambda \in \Lambda}{\bigcup}} g_{A_{\scriptscriptstyle {\lambda}}}$
\\
and 

$\overset{\sim}{\underset{\lambda \in \Lambda}{\bigcap}}h _{A_{\scriptscriptstyle {\lambda}}} = \overset{\sim}{\underset{\lambda \in \Lambda}{\bigcap}} (g_{A_{\scriptscriptstyle {\lambda_0}}} \overset{\sim}{\bigcup} g_{A_{\scriptscriptstyle {\lambda}}}) =  g_{A_{\scriptscriptstyle {\lambda_0}}}\overset{\sim}{\underset{\lambda \in \Lambda}{\bigcap}} (\overset{\sim}{\bigcup} g_{A_{\scriptscriptstyle {\lambda}}}) \neq \overset{\sim}{\Phi_A}$.

Therefore, by theorem 4.7 $\overset{\sim}{\underset{\lambda \in \Lambda}{\bigcup}}h _{A_{\scriptscriptstyle {\lambda}}}= \overset{\sim}{\underset{\lambda \in \Lambda}{\bigcup}} g_{A_{\scriptscriptstyle {\lambda}}}$ is fuzzy soft connected.
\end{proof}

\begin{thm}
If $(f_A, \tau_2)$ is a fuzzy soft connected space and $\tau_1$ is fuzzy soft coarser than $\tau_2$, then $(f_A, \tau_1)$ is also fuzzy soft connected.
\end{thm}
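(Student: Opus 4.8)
The plan is to argue by contraposition, exploiting the fact that openness is preserved when passing to a finer topology. Recall that ``$\tau_1$ is fuzzy soft coarser than $\tau_2$'' means precisely that every member of $\tau_1$ is a member of $\tau_2$, i.e. $\tau_1 \subseteq \tau_2$. The single idea driving the proof is that a fuzzy soft separation is built entirely from fuzzy soft open sets, and the property of being open is inherited upward along this inclusion.

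Concretely, suppose for contradiction that $(f_A, \tau_1)$ is fuzzy soft disconnected. By the definition of fuzzy soft separation there exist disjoint, nonempty fuzzy soft sets $h_A, k_A$, both fuzzy soft open with respect to $\tau_1$, such that $h_A \overset{\sim}{\bigcup} k_A = f_A$. The key step is to observe that, since $\tau_1 \subseteq \tau_2$, the inclusions $h_A \in \tau_1$ and $k_A \in \tau_1$ give $h_A \in \tau_2$ and $k_A \in \tau_2$; thus $h_A$ and $k_A$ are also fuzzy soft open with respect to $\tau_2$. Disjointness, nonemptiness, and the equation $h_A \overset{\sim}{\bigcup} k_A = f_A$ are properties of the underlying fuzzy soft sets and do not depend on the topology, so they persist unchanged. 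Hence the same pair $h_A, k_A$ constitutes a fuzzy soft separation of $(f_A, \tau_2)$, contradicting the hypothesis that $(f_A, \tau_2)$ is fuzzy soft connected. Therefore $(f_A, \tau_1)$ admits no fuzzy soft separation and is fuzzy soft connected.

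There is no genuine obstacle here: the whole content is the one-line inclusion $\tau_1 \subseteq \tau_2$ together with the topology-independence of the remaining conditions. The only point that warrants a moment's care is the direction of ``coarser'' in Definition~2.6, to be sure the inclusion runs $\tau_1 \subseteq \tau_2$ and not the reverse; getting this backwards would produce the false converse statement. As an alternative route one could instead invoke Theorem~4.1 and its clopen characterization: if $k_A$ is a nonempty proper fuzzy soft set that is both $\tau_1$-open and $\tau_1$-closed, then $k_A \in \tau_1 \subseteq \tau_2$ and $k_A^c \in \tau_1 \subseteq \tau_2$, so $k_A$ is simultaneously $\tau_2$-open and $\tau_2$-closed, again contradicting the connectedness of $(f_A, \tau_2)$; this gives the same conclusion through Theorem~4.1 rather than through the separation definition directly.
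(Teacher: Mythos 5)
Your proof is correct and is essentially identical to the paper's: both assume a fuzzy soft separation of $(f_A,\tau_1)$, push it into $\tau_2$ via the inclusion $\tau_1\subseteq\tau_2$, and derive a contradiction with the connectedness of $(f_A,\tau_2)$. Your additional remarks (the direction of ``coarser'' and the alternative via the clopen characterization) are sound but not needed.
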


\begin{proof}
Assume that $k_A, h_A$ form a fuzzy soft separation of $(f_A, \tau_1)$. Now $k_A, h_A \in \tau_1 \Rightarrow k_A, h_A \in \tau_2 \Rightarrow ~k_A, h_A$ form a fuzzy soft separation of $(f_A, \tau_2)$, a contradiction.

\end{proof}

\section{Conclusion} 
This paper investigates properties of separation axioms and connectedness of fuzzy soft topological spaces. Several properties of neighborhood system of a fuzzy soft point are discussed. Other concepts like, compactness and continuity for a fuzzy soft topological space etc can be studied further. \\


\begin{thebibliography}{99}
\bibitem{S6}
P.K. Maji, R. Biswas, A.R. Roy, Fuzzy Soft Sets, J. Fuzzy Math., 9(2001), pp 589-602.


\bibitem{S1}
D. Molodtsov, Soft Set Theory-First Results, Comp. Math. Appl., 37(1999), pp 19-31.
%\bibitem{S3} 
%I. Zorlutuna, M. Akdag, W. K. Min, S. Atmaca, Remarks on soft topological spaces, Annals of fuzzy mathematics and informatics (Article in press).
%\bibitem{S2}
%M. Shabir, M. Naz, On soft topological spaces, Comp. Math. Appl. 61(2011) 1786-1799.
%\bibitem{S4}
%N. Cagman, S. Karatas, S, Enginoglu, Soft Topology, Comp. Math. Appl. 62(2011) 351-358. 
\bibitem{S7}
S. Roy, T. K. Samanta, A note on fuzzy soft topological spaces, Annals of fuzzy mathematics and informatics (Article in press).
\bibitem{S5}
B. Tanay, M. Burc Kandemir, Topological structure of fuzzy soft sets, Comp. Math. Appl., 61(2011), pp 2952-2957. 
\end{thebibliography}
\end{document}